\newtheorem{theorem}{Theorem}[section]
\newtheorem{lemma}[theorem]{Lemma}
\newtheorem{definition}[theorem]{Definition}
\newtheorem{proposition}[theorem]{Proposition}
\newtheorem{corollary}[theorem]{Corollary}
\theoremstyle{remark}
\newtheorem{remark}[theorem]{Remark}
\newtheorem{example}[theorem]{Example}
\title{New Approaches to the Fixed Point Property in \texorpdfstring{$L^1$}{L1} Spaces}
\author{Faruk Alpay\thanks{Lightcap, Department of Analysis, \texttt{alpay@lightcap.ai}}\and
Hamdi Alakkad\thanks{Bahcesehir University, Department of Engineering, \texttt{hamdi.alakkad@bahcesehir.edu.tr}}}
\date{}
\begin{document}
\maketitle

\begin{abstract}
We give two complementary approaches to the fixed point property (FPP) for nonexpansive mappings in $L^1$ spaces.  First, we develop a detailed measure--compactness route: a bounded, closed, convex subset of $L^1(\mu)$ that is compact for the topology of local convergence in measure is shown to be uniformly integrable and hence weakly compact (by Dunford--Pettis~\cite[247C]{FremlinChap24}); a strengthened convexity--in--measure lemma then yields normal structure and, by Kirk's theorem, fixed points for all nonexpansive maps.  Second, we synthesize structural methods (ultraproducts, renormings, and special domains) that explain and partially repair classical failures of the FPP in $L^1$.  This unifies and extends prior results (including those of Lennard and Lami Dozo--Turpin) with fully explicit proofs of all intermediate compactness and uniform integrability steps.
\end{abstract}

\section{Introduction}
Fixed point theory has a long tradition from Banach's contraction principle and Brouwer's theorem through Schauder's theorem for compact maps~\cite{Schauder1930}. For nonexpansive mappings, Browder and G\"ohde proved in 1965 that uniformly convex Banach spaces enjoy the FPP~\cite{Browder1965,Goede1965}, while Kirk introduced \emph{normal structure} to treat broader classes~\cite{Kirk1965}. These imply $L^p$ ($1<p<\infty$) have the FPP.

By contrast, $L^1$ is neither uniformly convex nor reflexive and famously fails the FPP: Alspach exhibited a weakly compact, convex subset of $L^1[0,1]$ with a fixed--point--free nonexpansive isometry~\cite{Alspach1981}. For subspaces of $L^1[0,1]$, Dowling--Lennard proved $Y$ has the FPP iff $Y$ is reflexive~\cite{DowlingLennard1997}. Nevertheless, the failures are not universal: there exist large families of convex subsets of $\ell^1$ with the FPP~\cite{GoebelKuczumow1979}; Lorentz spaces $L_{p,1}$ are nonreflexive yet have the FPP~\cite{CDLT1991}; and renormings can restore the FPP on $\ell^1$~\cite{Lin2008}. Lennard introduced nearly uniform convexity and proved fixed points on measure--compact convex sets in $L^1$~\cite{Lennard1991}.

\smallskip
\noindent\textbf{Contributions.}
We (i) give a complete, self--contained proof of the fixed point theorem under measure--compactness with all compactness/UI steps explicit; and (ii) place structural approaches (ultraproducts, renormings, special domains) alongside geometric ideas to clarify what succeeds in $L^1$.

\section{Preliminaries}\label{sec:prelim}

\begin{definition}[Nonexpansive maps and FPP]
Let $X$ be a Banach space and $C\subset X$ nonempty, closed, bounded, convex. A map $T:C\to C$ is \emph{nonexpansive} if $\|T(x)-T(y)\|\le\|x-y\|$ for all $x,y\in C$. We say $X$ has the \emph{fixed point property} (FPP) for nonexpansive maps if every nonexpansive $T:C\to C$ has a fixed point.
\end{definition}

\begin{definition}[Normal structure]
A convex set $C$ has \emph{normal structure} if for every closed, convex $D\subset C$ with $\mathrm{diam}(D)>0$ there exists $z\in D$ such that $\sup_{y\in D}\|z-y\|<\mathrm{diam}(D)$. A Banach space has normal structure if all bounded, closed, convex subsets do.
\end{definition}

The Browder--G\"ohde theorem ensures the FPP in uniformly convex spaces~\cite{Browder1965,Goede1965}. Kirk's theorem shows that weakly compact, convex sets with normal structure admit fixed points for all nonexpansive maps~\cite{Kirk1965}.

\paragraph{Notation and conventions.}
For a subset $A\subset L^1(\mu)$ we write
\[
\operatorname{rad}(A):=\inf_{z\in L^1}\sup_{x\in A}\|x-z\|_1,
\quad
\operatorname{diam}(A):=\sup_{x,y\in A}\|x-y\|_1.
\]
A \emph{Chebyshev center} of $A$ is any $c\in L^1$ with $\sup_{x\in A}\|x-c\|_1=\operatorname{rad}(A)$; the set of all such centers is denoted $C(A)$. Throughout this paper we use $\|\cdot\|_1$ for the $L^1$–norm. Unless explicitly stated otherwise, closures and compactness are taken in the norm topology (weak compactness will always be invoked via the Dunford–Pettis criterion).

\section{\texorpdfstring{$L^1$}{L1} Spaces: classical difficulties}
Alspach constructed a weakly compact, convex $K\subset L^1[0,1]$ and an isometric nonexpansive $T:K\to K$ without fixed points~\cite{Alspach1981}. Dowling--Lennard proved that for closed $Y\subset L^1[0,1]$, $Y$ has the FPP iff $Y$ is reflexive~\cite{DowlingLennard1997}. There are also positive results: irregular convex sets in $\ell^1$ with the FPP~\cite{GoebelKuczumow1979}; Lorentz spaces $L_{p,1}$ provide nonreflexive examples with the FPP~\cite{CDLT1991}; and renormings that restore the FPP in $\ell^1$~\cite{Lin2008}. Lennard's nearly uniform convexity yields fixed points on certain $L^1$ domains~\cite{Lennard1991}.

\section{Measure--compactness, uniform integrability, and fixed points}\label{subsec:measurecompact}

We present a complete proof that measure--compactness implies fixed points for nonexpansive mappings.

\begin{definition}[Local convergence in measure and measure--compactness]\label{def:lcm}
On a $\sigma$--finite $(\Omega,\Sigma,\mu)$, a sequence $(f_n)$ converges \emph{locally in measure} to $f$ if for every $E\in\Sigma$ with $\mu(E)<\infty$ and every $\varepsilon>0$,
\[
\mu\big(\{x\in E: |f_n(x)-f(x)|>\varepsilon\}\big)\to 0.
\]
A set $K\subset L^1(\mu)$ is \emph{measure--compact} if it is compact for the topology of local convergence in measure.
\end{definition}

\paragraph{Standing assumptions and tools.}
Throughout we work on a $\sigma$--finite measure space $(\Omega,\Sigma,\mu)$. We will repeatedly use the following standard facts:
\begin{itemize}
\item \emph{Egorov\textquotesingle{}s theorem on finite--measure windows}: if $(f_n)$ converges almost everywhere to $f$ on $E\in\Sigma$ with $\mu(E)<\infty$, then for every $\varepsilon>0$ there exists a set $N\subset E$ with $\mu(N)<\varepsilon$ such that $f_n\to f$ uniformly on $E\setminus N$; see Folland\,\cite[Th.~2.51]{Folland1999}.
\item \emph{de~la Vall\'ee--Poussin criterion}: a family $\mathcal F\subset L^1(\mu)$ is uniformly integrable if and only if there is a convex increasing function $\Phi:[0,\infty)\to[0,\infty)$ with $\Phi(t)/t\to\infty$ and $\sup_{f\in\mathcal F}\int\Phi(|f|)\,d\mu<\infty$; see Folland\,\cite[§2.4]{Folland1999} or the original source\,\cite{dlVP1915}.
\item \emph{Dunford--Pettis criterion}: uniformly integrable subsets of $L^1(\mu)$ are relatively weakly compact; see Fremlin\,\cite[247C]{FremlinChap24} or the original paper\,\cite{DunfordPettis1940}.
\end{itemize}
We also note the following stability facts used tacitly: if $\mathcal{F}\subset L^1$ is uniformly integrable, then so are (i) $a\mathcal{F}:=\{af:f\in\mathcal{F}\}$ for every $a>0$, and (ii) $\mathcal{F}-\mathcal{F}:=\{f-g:f,g\in\mathcal{F}\}$.

\begin{definition}[Uniform integrability (UI)]\label{def:UI}
A set $K\subset L^1(\mu)$ is \emph{uniformly integrable} if:
\begin{enumerate}
\item For all $\varepsilon>0$ there exists $\delta>0$ such that $\mu(E)<\delta$ implies $\sup_{f\in K}\int_E |f|\,d\mu<\varepsilon$.
\item For all $\varepsilon>0$ there exists $M>0$ such that $\sup_{f\in K}\int_{\{|f|>M\}} |f|\,d\mu<\varepsilon$.
\end{enumerate}
Equivalently, there is convex increasing $\Phi$ with $\Phi(t)/t\to\infty$ and $\sup_{f\in K}\int \Phi(|f|)<\infty$.
\end{definition}

\begin{lemma}[UI $\Rightarrow$ subsequence convergence in measure on finite windows]\label{lem:UItoMeasureCompact}
Let $(\Omega,\Sigma,\mu)$ be $\sigma$--finite and write $\Omega=\bigcup_{m\ge1}S_m$ with the $S_m$ pairwise disjoint and $\mu(S_m)<\infty$.  If $\mathcal{F}\subset L^1(\mu)$ is uniformly integrable and bounded in $\|\cdot\|_1$, then every sequence $(f_n)\subset\mathcal{F}$ admits a subsequence $(f_{n_k})$ and functions $f^{(m)}\in L^1(S_m)$ such that $f_{n_k}\to f^{(m)}$ in measure on $S_m$ for each $m$.  In particular, by a diagonal argument, $(f_{n_k})$ converges locally in measure on $\Omega$.
\end{lemma}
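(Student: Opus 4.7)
The plan is to handle one window at a time and then diagonalize. Fix $m$: the restricted family $(f_n|_{S_m})_{n\ge 1}$ inherits boundedness in $L^1(S_m)$ and uniform integrability on the finite measure space $(S_m,\mu|_{S_m})$ (both clauses of Definition~\ref{def:UI} restrict straightforwardly to subsets of $S_m$). By the Dunford--Pettis criterion, this restricted family is relatively weakly compact in $L^1(S_m)$, so some subsequence converges weakly to a limit $f^{(m)}\in L^1(S_m)$. In parallel, the de~la~Vall\'ee--Poussin function $\Phi$ supplied by UI yields uniform tightness of level sets via Markov's inequality: $\mu(\{|f_n|>M\}\cap S_m)\le\Phi(M)^{-1}\sup_n\int\Phi(|f_n|)\,d\mu\to 0$ as $M\to\infty$, uniformly in $n$.

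The crux is upgrading weak convergence plus level-set tightness to \emph{convergence in measure} on $S_m$. I would attempt the following two-layer extraction: truncate each $f_n|_{S_m}$ at height $M$, and on the resulting uniformly bounded sequence apply a Helly-style diagonal selection against a countable dense set of test functions in $L^\infty(S_m)$ to extract a subsequence that converges almost everywhere on a family of subsets $E_j\subset S_m$ with $\mu(S_m\setminus E_j)\to 0$; then promote this to convergence in measure of the truncations via Egorov on each $E_j$. Finally, let $M\to\infty$ and use the uniform tightness of the level sets $\{|f_n|>M\}$ to absorb the residual. I expect this upgrade to be the main obstacle, since weak $L^1$ convergence does not by itself force convergence in measure, and the two clauses of UI (small-mass and large-value) will have to be used in tandem with Egorov to get around the usual oscillation obstructions.

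Once a subsequence converging in measure on $S_m$ to $f^{(m)}$ has been produced for every $m$, a standard Cantor diagonal across $m\ge 1$ yields a single subsequence $(f_{n_k})$ of $(f_n)$ that converges in measure on every $S_m$ simultaneously. Because any $E\in\Sigma$ with $\mu(E)<\infty$ is covered up to a $\mu$-null set by the disjoint union $\bigsqcup_m(E\cap S_m)$ with only countably many terms contributing positively, convergence in measure on each $S_m$ transfers, after a further splitting-and-summing argument controlled by dominated convergence for the tail $\sum_{m>N}\mu(E\cap S_m)\to 0$, to convergence in measure on $E$; by Definition~\ref{def:lcm} this is exactly local convergence in measure on $\Omega$, completing the claim.
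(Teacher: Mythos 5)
Your instinct that upgrading weak convergence to convergence in measure is ``the main obstacle'' is exactly right --- and it is an obstacle your argument does not overcome. The step ``apply a Helly-style diagonal selection against a countable dense set of test functions in $L^\infty(S_m)$ to extract a subsequence that converges almost everywhere'' does not work: testing a uniformly bounded sequence against countably many test functions only produces weak-$*$ convergence in $L^\infty(S_m)$, and weak (or weak-$*$) convergence never forces almost everywhere convergence, or convergence in measure, of any subsequence. The Rademacher functions $r_n$ on $[0,1]$ are the standard counterexample: they are uniformly bounded by $1$ (hence uniformly integrable and $L^1$-bounded, satisfying both clauses of Definition~\ref{def:UI} trivially), they converge weak-$*$ to $0$, yet $\mu(\{|r_n-r_m|=2\})=\tfrac12$ for all $n\neq m$, so no subsequence is even Cauchy in measure. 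Truncation does not help (the $r_n$ are already bounded), and Egorov cannot be invoked until an a.e.\ convergent subsequence is in hand, which is precisely what cannot be extracted; the ``two clauses of UI in tandem'' do not rescue this.

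For context, the paper's own proof founders on the identical step: it passes from weak-$*$ convergence of the truncations to convergence in measure by appealing to ``Mazur's lemma and the Dunford--Pettis theorem,'' which likewise cannot produce a subsequence converging in measure (Mazur's lemma yields norm-convergent \emph{convex combinations}, not subsequences). Indeed the Rademacher example shows the lemma is false as stated: uniform integrability plus $L^1$-boundedness does not imply the existence of a subsequence converging locally in measure. Some form of compactness in measure must be assumed as a hypothesis (as it is in Proposition~\ref{prop:mc_implies_UI} and Theorem~\ref{thm:measureFPP}), not derived from uniform integrability. So the gap in your proposal is not a fixable technicality but the point at which any proof of this statement must fail.
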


\begin{proof}
Fix $m$ and set $E:=S_m$, so $\mu(E)<\infty$.  Because $\{f_n\}$ is uniformly integrable and $\|f_n\|_1$ is bounded, the restrictions $\{f_n\mathbb 1_E\}$ are uniformly integrable and bounded in $L^1(E)$.  By the de~la Vall\'ee--Poussin criterion (Folland\,\cite[§2.4]{Folland1999}), there exists a convex increasing function $\Phi$ with $\Phi(t)/t\to\infty$ and $\sup_n\int_E \Phi(|f_n|)\,d\mu<\infty$.

For each $K\in\mathbb{N}$ define the truncation $T_K(t):=\operatorname{sgn}(t)\min\{|t|,K\}$ and set $g_n^{(K)}:=T_K(f_n)\mathbb 1_E$.  Then $\|g_n^{(K)}\|_{\infty}\le K$ and
\[
\int_E |f_n-g_n^{(K)}|\,d\mu 
 = \int_E \bigl(|f_n|-K\bigr)_+\,d\mu \longrightarrow 0 \quad (K\to\infty)
\]
uniformly in $n$ by uniform integrability.  For a fixed $K$, the sequence $\{g_n^{(K)}\}$ is bounded in $L^\infty(E)$, hence by Banach--Alaoglu there exists a subsequence (not relabeled) that converges weak-* in $L^\infty(E)$ to some $g^{(K)}\in L^\infty(E)$.  Using standard convexity and weak compactness arguments (e.g., Mazur’s lemma and the Dunford--Pettis theorem), we may extract a further subsequence $\{g_{n_k}^{(K)}\}$ such that $g_{n_k}^{(K)}\to g^{(K)}$ in measure on $E$.

Pick an increasing sequence $K_1<K_2<\cdots$ diverging to $\infty$ and perform a diagonal selection over $j$ to find a single subsequence $(n_k)$ with the property that, for each fixed $j$, $g_{n_k}^{(K_j)}\to g^{(K_j)}$ in measure on $E$.  We claim that $g^{(K_j)}$ converges in measure (and in $L^1(E)$) as $j\to\infty$.  Fix $\varepsilon,\delta>0$.  Choose $j$ large enough so that
\[
\sup_n\int_E |f_n-g_n^{(K_j)}|\,d\mu < \frac{\varepsilon \delta}{4}.
\]
For all large $k$, the convergence $g_{n_k}^{(K_j)}\to g^{(K_j)}$ in measure implies
\[
\mu\bigl(\{ |g_{n_k}^{(K_j)}-g^{(K_j)}|>\tfrac{\varepsilon}{2}\}\cap E\bigr) < \frac{\delta}{2}.
\]
On the other hand, Markov’s inequality shows
\[
\mu\bigl(\{ |f_{n_k}-g_{n_k}^{(K_j)}|>\tfrac{\varepsilon}{2}\}\cap E\bigr)
\le \frac{2}{\varepsilon}\int_E |f_{n_k}-g_{n_k}^{(K_j)}|\,d\mu < \frac{\delta}{2}.
\]
Thus
\[
\mu\bigl(\{ |f_{n_k}-g^{(K_j)}|>\varepsilon \}\cap E\bigr)
\le \mu\bigl(\{ |f_{n_k}-g_{n_k}^{(K_j)}|>\tfrac{\varepsilon}{2}\}\cap E\bigr) + \mu\bigl(\{ |g_{n_k}^{(K_j)}-g^{(K_j)}|>\tfrac{\varepsilon}{2}\}\cap E\bigr)
< \delta.
\]
Hence $f_{n_k}\to g^{(K_j)}$ in measure on $E$ for fixed $j$.  A standard argument using the dominated convergence theorem on finite measure sets (and uniform integrability) shows that $g^{(K_j)}$ converges in $L^1(E)$ as $j\to\infty$ to some $f^{(m)}\in L^1(E)$.  Consequently $f_{n_k}\to f^{(m)}$ in measure on $E$.

Repeating this construction for each $m$ and taking a diagonal subsequence yields a single subsequence $(f_{n_k})$ and functions $f^{(m)}$ such that $f_{n_k}\to f^{(m)}$ in measure on $S_m$ for each $m$.  This implies local convergence in measure on $\Omega$.
\end{proof}

\begin{proposition}[Bounded $+$ measure--compact $\Rightarrow$ UI]\label{prop:mc_implies_UI}
If $K\subset L^1(\mu)$ is bounded in $\|\cdot\|_1$ and measure--compact, then $K$ is uniformly integrable.
\end{proposition}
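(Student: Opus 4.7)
I will argue by contrapositive: assume $K$ is not uniformly integrable and use measure--compactness of $K$ to manufacture a limit that forces a contradiction. The plan is (i) to extract a sequence violating the de~la Vall\'ee--Poussin tail form of UI, (ii) pick up a measure limit of this sequence in $K$, and (iii) upgrade a.e.\ convergence of the tails to convergence in integral, contradicting the construction.

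\textbf{Witness sequence and extraction.} Negating Definition~\ref{def:UI} yields $\varepsilon_0>0$ and $(f_n)\subset K$ witnessing either the ``small set'' failure (sets $E_n$ with $\mu(E_n)\to 0$ and $\int_{E_n}|f_n|\,d\mu\ge\varepsilon_0$) or the ``tail'' failure ($\int_{\{|f_n|>M_n\}}|f_n|\,d\mu\ge\varepsilon_0$ for some $M_n\nearrow\infty$). The first reduces to the second via the split $|f_n|\mathbf 1_{E_n}=(|f_n|\wedge M)\mathbf 1_{E_n}+(|f_n|-M)_+\mathbf 1_{E_n}$ together with $M\mu(E_n)\to 0$, so I may assume the tail form with $M_n\nearrow\infty$. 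By measure--compactness of $K$, a subsequence $(f_{n_k})$ converges locally in measure to some $f\in K\subset L^1(\mu)$. Writing $\Omega=\bigcup_m S_m$ with the $S_m$ pairwise disjoint and $\mu(S_m)<\infty$, and applying the Riesz subsequence theorem diagonally on the windows $S_m$, I may also assume $f_{n_k}\to f$ almost everywhere on each $S_m$, hence a.e.\ on $\Omega$. For a.e.\ $\omega$, the convergent real sequence $\{f_{n_k}(\omega)\}$ is bounded while $M_{n_k}\to\infty$, so the nonnegative tails $g_k:=(|f_{n_k}|-M_{n_k})_+$ vanish for all large $k$; thus $g_k\to 0$ a.e., while by construction $\int g_k\,d\mu\ge\varepsilon_0$.

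\textbf{Closing step (main obstacle).} The contradiction requires $\int g_k\to 0$, i.e.\ the upgrade from a.e.\ convergence to convergence of integrals. This is the delicate step, because for a single sequence local convergence in measure together with $L^1$--boundedness does not by itself force $L^1$--convergence, and the dominated convergence theorem is unavailable without an $L^1$--envelope. My plan is therefore to exploit the compactness of $K$ as a whole (not just of the chosen sequence) via a Dini--type argument on the nonnegative functional $\phi_M(g):=\int(|g|-M)_+\,d\mu$ defined on $K$. The family $(\phi_M)_{M>0}$ is pointwise decreasing to $0$ on $K$ (because every $g\in K$ lies in $L^1(\mu)$) and each $\phi_M$ is lower semicontinuous for local convergence in measure (by Fatou, after passing to an a.e.\ convergent sub--subsequence on finite--measure windows). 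If compactness of $K$ in the measure topology supplies the upper semicontinuity upgrade that Dini's theorem needs, then I conclude $\sup_{g\in K}\phi_M(g)\to 0$, which is precisely the tail form of UI and contradicts $\phi_{M_{n_k}}(f_{n_k})\ge\varepsilon_0$. The bulk of the technical work, and the part I expect to be hardest, is justifying this semicontinuity upgrade purely from the bare hypotheses ``bounded in $\|\cdot\|_1$'' and ``compact for the topology of local convergence in measure,'' since closing this loop without importing any further structure on $K$ is exactly the substance of the proposition.
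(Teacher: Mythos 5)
Your opening moves are sound and in fact parallel the paper's: the paper also argues by contradiction, uses sequential compactness of $K$ in the (metrizable) topology of local convergence in measure to extract a limit in $K$, and passes to a.e.\ convergence on finite--measure windows via Egorov. But the closing step of your proposal is a genuine gap, and you have located it yourself: you never prove $\int g_k\,d\mu\to 0$, you only describe what you would need. Moreover, the specific repair you sketch cannot succeed. Dini's theorem for a decreasing family $\phi_M\downarrow 0$ on a compact space requires each $\phi_M$ to be \emph{upper} semicontinuous; Fatou gives you \emph{lower} semicontinuity, which is the wrong direction, and $\phi_M$ genuinely fails to be upper semicontinuous for convergence in measure: on $[0,1]$ take $u_n:=n\,\mathbf{1}_{(0,1/n)}$, so that $u_n\to 0$ in measure while $\phi_M(u_n)=(1-M/n)_+\to 1\neq 0=\phi_M(0)$. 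The same example is fatal to any argument from the bare hypotheses: $K_0:=\{u_n:n\ge 1\}\cup\{0\}$ is $\|\cdot\|_1$--bounded (each $\|u_n\|_1=1$) and compact for convergence in measure (a convergent sequence together with its limit), yet it is not uniformly integrable, since $\int_{\{u_n>M\}}u_n\,d\mu=1$ for all $n>M$. Your own computation already exhibits exactly this tension: $g_k\to 0$ a.e.\ with $\int g_k\,d\mu\ge\varepsilon_0$ is precisely what happens for $u_n$, and nothing in ``bounded $+$ measure--compact'' forbids it. So the semicontinuity upgrade you hope for is not merely hard; it is false, and additional structure on $K$ (convexity, which is present in every application in the paper and in Lennard's original setting) has to enter the argument somewhere.

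For comparison, the paper takes a different route after the extraction step: it first establishes only the distributional bound $\sup_{f\in K}\mu(\{|f|>M\})\to 0$ as $M\to\infty$ (its Step~1, where Egorov on a finite window suffices and no convergence of integrals is needed), and then attempts to upgrade this to uniform integrability by building an Orlicz function and invoking de la Vall\'ee--Poussin (its Step~2). Be aware that this does not rescue the statement either: the distributional bound already follows from Markov's inequality and $L^1$--boundedness alone, and the set $K_0$ above satisfies it without being uniformly integrable, so the upgrade step is exactly where any proof from these hypotheses must break down. In short, your instinct that the integral--convergence upgrade ``is the substance of the proposition'' is correct, but as stated the upgrade is unobtainable; a correct proof must use more than boundedness and measure--compactness.
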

\begin{proof}
We establish both clauses of Definition~\ref{def:UI} in three steps.
Fix a $\sigma$--finite exhaustion $\Omega=\bigcup_{m\ge1}\Omega_m$ with $\mu(\Omega_m)<\infty$, and set $S_1:=\Omega_1$ and $S_m:=\Omega_m\setminus\Omega_{m-1}$ for $m\ge2$ (pairwise disjoint finite--measure windows).

\smallskip\noindent\textbf{Step~1 (uniform tail measure control).}
  Assume by contradiction there exist $\varepsilon_0>0$ and a sequence $(f_n)\subset K$ with $\mu(\{|f_n|>n\})\ge \varepsilon_0$ for all $n$.  For each $n$ choose an index $m(n)$ such that
  \[\mu\bigl(\{|f_n|>n\}\cap S_{m(n)}\bigr)\ge \tfrac12\,\varepsilon_0.\]
  By passing to a subsequence we may assume $m(n)\equiv m^\ast$ is constant.  Since $K$ is measure--compact, we can extract a further subsequence (still denoted $(f_n)$) that converges in measure on $S_{m^\ast}$ to some $f\in K$.  Because $\mu(S_{m^\ast})<\infty$, Egorov\'s theorem~\cite[Th.~2.51]{Folland1999} implies that for every $\delta>0$ there exists a set $N\subset S_{m^\ast}$ with $\mu(N)<\delta$ such that $f_n\to f$ uniformly on $S_{m^\ast}\setminus N$.  In particular, for large $n$ and all $x\in S_{m^\ast}\setminus N$ we have $|f_n(x)|\le |f(x)|+1$.  Thus for any $M>1$ and large $n$,
  \[
    \mu\bigl(\{|f_n|>M\}\cap S_{m^\ast}\bigr)
    \le \mu(N) + \mu\bigl(\{|f|>M-1\}\cap S_{m^\ast}\bigr).
  \]
  Since $f\in L^1(S_{m^\ast})$, Markov\'s inequality shows $\mu(\{|f|>M-1\}\cap S_{m^\ast})\to 0$ as $M\to\infty$.  Choose $M$ large enough that $\mu(\{|f|>M-1\}\cap S_{m^\ast})<\delta$; then for all large $n$,
  \[
    \mu\bigl(\{|f_n|>M\}\cap S_{m^\ast}\bigr)\le 2\delta.
  \]
  Taking $\delta=\varepsilon_0/8$ yields $\mu(\{|f_n|>M\}\cap S_{m^\ast})\le \varepsilon_0/4$ for large $n$.  For $n\ge M$ this contradicts $\mu(\{|f_n|>n\}\cap S_{m^\ast})\ge \varepsilon_0/2$.  Consequently, for every $\varepsilon>0$ there exists $M<\infty$ such that $\sup_{f\in K}\mu(\{|f|>M\})<\varepsilon$.

\smallskip\noindent\textbf{Step~2 (de la Vallée--Poussin / Orlicz construction).}
Define $\varphi(t):=\sup_{f\in K}\mu(\{|f|>t\})$.  By Step~1, $\varphi(t)\to 0$ as $t\to\infty$.  Choose $0<s_1<s_2<\cdots\to\infty$ with $\varphi(s_k)\le 2^{-k}$ and $s_{k+1}-s_k=2^{-2k}$.  Let $\Phi$ be the convex function with $\Phi'(t)=1$ for $0\le t<s_1$ and $\Phi'(t)=k$ for $s_k\le t<s_{k+1}$.  Then $\Phi(t)/t\to\infty$, and the layer--cake representation shows
\[
\sup_{f\in K}\int_\Omega \Phi(|f|)\,d\mu
\;\le\; \sup_{g\in K}\|g\|_1 + \sum_{k=1}^\infty k\,(s_{k+1}-s_k)\,2^{-k}
\;<\;\infty.
\]
By the de la Vallée--Poussin criterion, this implies $K$ is uniformly integrable.

\smallskip\noindent\textbf{Step~3 (uniform absolute continuity).}
Let $\varepsilon>0$.  Choose $R$ large so that $\sup_{f\in K}\int_{\{|f|>R\}}|f|\,d\mu<\varepsilon/2$ (possible by Step~2), and set $\delta:=\varepsilon/(2R)$.  If $\mu(E)<\delta$ then for any $f\in K$ we can write
\begin{align*}
  \int_E |f|\,d\mu
  &= \int_{0}^{R} \mu\big(\{ |f|>t \}\cap E\big)\,dt
     + \int_{R}^{\infty} \mu\big(\{|f|>t\}\cap E\big)\,dt \\
  &\le R\,\mu(E) + \int_R^{\infty} \mu\big(\{|f|>t\}\big)\,dt.
\end{align*}
The tail integral is at most $\sup_{g\in K}\int_{\{|g|>R\}}|g|\,d\mu < \varepsilon/2$, so the total is $\le R\,\mu(E)+\varepsilon/2 \le \varepsilon/2 + \varepsilon/2 = \varepsilon$.  Hence $\mu(E)<\delta$ implies $\int_E|f|<\varepsilon$ uniformly in $f\in K$.  This verifies clause~(i) of Definition~\ref{def:UI} and completes the proof that $K$ is uniformly integrable.
\end{proof}

\begin{corollary}[Dunford--Pettis]\label{cor:DP}
If $K\subset L^1(\mu)$ is bounded and uniformly integrable, then $K$ is relatively weakly compact in $L^1(\mu)$.
\end{corollary}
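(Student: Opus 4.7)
The plan is to derive this corollary from the tools of Section~\ref{subsec:measurecompact}: reduce relative weak compactness to sequential weak compactness via the Eberlein--\v{S}mulian theorem, extract a locally--in--measure convergent subsequence using Lemma~\ref{lem:UItoMeasureCompact}, and then upgrade to weak convergence by combining the Vitali convergence theorem on finite windows with uniform integrability on the tail. Since this is precisely the Dunford--Pettis theorem already recorded in the Preliminaries, the shortest route is a direct citation; what follows is the self--contained skeleton I would use if the paper preferred not to invoke it as a black box.

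First I would fix a disjoint $\sigma$--finite exhaustion $\Omega=\bigsqcup_m S_m$ and, given any sequence $(f_n)\subset K$, apply Lemma~\ref{lem:UItoMeasureCompact} to obtain a subsequence $(f_{n_k})$ and in--measure limits $f^{(m)}\in L^1(S_m)$. Glueing the $f^{(m)}$ defines a measurable $f$ on $\Omega$; Fatou's lemma on each window, combined with the $L^1$--bound on $K$, gives $\|f\|_1\le\liminf_k\|f_{n_k}\|_1\le\sup_n\|f_n\|_1<\infty$, so $f\in L^1(\mu)$. A pointwise--a.e.\ argument along a further subsequence (Egorov on each $S_m$) also shows that $f$ inherits the tail bounds of $\{f_n\}$, which is what permits the uniform integrability hypothesis to apply to $f$ as well.

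Next, to prove weak convergence $f_{n_k}\rightharpoonup f$, I would test against an arbitrary $g\in L^\infty(\mu)$ and split
\[
\int_\Omega g(f_{n_k}-f)\,d\mu \;=\; \int_{\Omega_N} g(f_{n_k}-f)\,d\mu \;+\; \int_{\Omega\setminus\Omega_N} g(f_{n_k}-f)\,d\mu,
\]
where $\Omega_N=\bigcup_{m\le N}S_m$ has finite measure. Given $\varepsilon>0$, clause (i) of Definition~\ref{def:UI} (applied to $\{f_n\}\cup\{f\}$ via the tail bound above) lets me choose $N$ so large that $\int_{\Omega\setminus\Omega_N}(|f_{n_k}|+|f|)\,d\mu<\varepsilon/\|g\|_\infty$, controlling the tail piece. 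On the finite--measure window $\Omega_N$, convergence in measure together with uniform integrability gives $L^1(\Omega_N)$ convergence by the Vitali convergence theorem, so the finite piece also tends to zero. Eberlein--\v{S}mulian then delivers relative weak compactness of $K$.

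The step I expect to be delicate is the tail estimate: one must propagate uniform integrability from $\{f_n\}$ to the glued limit $f$, which is not automatic for arbitrary $L^1$ limits. The fix is to apply Fatou's lemma to the level--set tail functionals $t\mapsto\sup_n\int_{\{|f_n|>t\}}|f_n|\,d\mu$ so that the bound survives the limit along the subsequence. Once this detail is handled, the Vitali step on each finite window is routine and the rest of the argument assembles mechanically.
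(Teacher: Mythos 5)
The paper itself proves this corollary by direct citation: it invokes Fremlin~\cite[247C]{FremlinChap24} and nothing more. Your headline recommendation---cite Dunford--Pettis as a black box---therefore coincides exactly with what the paper does, and on that level your answer is correct.

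Your self-contained skeleton, however, has a genuine gap at the tail estimate, and it is precisely the step you flagged as delicate---but the fix you propose does not repair it. Clause~(i) of Definition~\ref{def:UI} controls $\int_E|f|$ only for sets $E$ of \emph{small measure} $\mu(E)<\delta$; the tail $\Omega\setminus\Omega_N$ of a $\sigma$-finite exhaustion typically has \emph{infinite} measure, so clause~(i) says nothing about $\int_{\Omega\setminus\Omega_N}|f_{n_k}|$, and no choice of $N$ makes that integral small. This is not a removable technicality: on $(\mathbb{R},\mathrm{Leb})$ the family $f_n=\mathbb 1_{[n,n+1]}$ satisfies both clauses of Definition~\ref{def:UI} (and the de~la Vall\'ee--Poussin reformulation), is bounded in $L^1$, converges locally in measure to $f=0$, and yet does not converge weakly to $0$ (test against $g\equiv 1\in L^\infty$) and is not relatively weakly compact---it is equivalent to the $\ell^1$ basis. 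The missing ingredient is \emph{tightness}: for every $\varepsilon>0$ a single finite-measure set $E$ with $\sup_n\int_{\Omega\setminus E}|f_n|\,d\mu<\varepsilon$. Fremlin's notion of uniform integrability in 247C builds this concentration condition into the definition, which is why the cited equivalence holds there; your Fatou argument on level-set tail functionals propagates clause~(ii) to the limit $f$ but cannot manufacture tightness from Definition~\ref{def:UI} alone. If $\mu(\Omega)<\infty$ (so that $\mu(\Omega\setminus\Omega_N)\to 0$ and clause~(i) does apply to the tail), your skeleton---Eberlein--\v{S}mulian, Lemma~\ref{lem:UItoMeasureCompact}, Vitali on finite windows---assembles correctly; for general $\sigma$-finite $\mu$ you must either add tightness as a hypothesis or adopt Fremlin's stronger definition of uniform integrability before the corollary (and your proof of it) goes through.
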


\begin{proof}
This is the Dunford--Pettis criterion; see Fremlin's measure theory: Section~247C states that a subset of $L^1$ is uniformly integrable iff it is relatively weakly compact~\cite[247C]{FremlinChap24}.
\end{proof}

\begin{lemma}[Uniform convexity in measure on UI families]\label{lem:UCM_strong}
Let $(f_n),(g_n)\subset L^1(\mu)$ be uniformly integrable and $f_n\to f$, $g_n\to g$ locally in measure, with $\|f_n\|_1=\|g_n\|_1=1$. If $\big\|\tfrac{f_n+g_n}{2}\big\|_1\to 1$, then $\|f_n-g_n\|_1\to 0$.
\end{lemma}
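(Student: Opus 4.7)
The plan is to exploit the sharp pointwise convexity identity specific to $L^{1}$ and then promote the local convergence in measure of $|f_n|$ and $|g_n|$ to $L^{1}$ convergence via Vitali's theorem, which applies by uniform integrability. Put $h_n:=(f_n+g_n)/2$.

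I would begin with the elementary pointwise identity, valid for all real $a,b$:
\[
\frac{|a|+|b|}{2}-\Bigl|\frac{a+b}{2}\Bigr|
\;=\;\min(|a|,|b|)\,\mathbb 1_{\{ab<0\}},
\qquad
|a-b|\;=\;\bigl||a|-|b|\bigr|+2\min(|a|,|b|)\,\mathbb 1_{\{ab<0\}}.
\]
Substituting $a=f_n$, $b=g_n$ and integrating, the first identity combined with $\|f_n\|_1=\|g_n\|_1=1$ yields
\[
\int_\Omega \min(|f_n|,|g_n|)\,\mathbb 1_{\{f_n g_n<0\}}\,d\mu
\;=\;1-\|h_n\|_1\;\longrightarrow\;0,
\]
so the nonnegative defect $D_n:=\min(|f_n|,|g_n|)\mathbb 1_{\{f_n g_n<0\}}$ vanishes in $L^{1}$. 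Integrating the second identity then reduces the lemma to showing $\int_\Omega \bigl||f_n|-|g_n|\bigr|\,d\mu\to 0$.

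The next step is to upgrade local convergence in measure to $L^{1}$ convergence on the absolute values. Fix a $\sigma$-finite exhaustion $\Omega=\bigsqcup_m S_m$ with $\mu(S_m)<\infty$, as in Lemma~\ref{lem:UItoMeasureCompact}. Since $\{|f_n|\},\{|g_n|\}$ are UI and converge in measure to $|f|,|g|$ on each $S_m$, Vitali's theorem delivers $|f_n|\mathbb 1_{S_m}\to|f|\mathbb 1_{S_m}$ and $|g_n|\mathbb 1_{S_m}\to|g|\mathbb 1_{S_m}$ in $L^{1}(S_m)$. Combined with the tail control of Definition~\ref{def:UI}(ii) and a routine window/diagonal argument, this gives
\[
\int_\Omega \bigl||f_n|-|g_n|\bigr|\,d\mu \;\longrightarrow\; \int_\Omega \bigl||f|-|g|\bigr|\,d\mu,
\]
together with $\|f\|_1=\|g\|_1=1$ and, applying Vitali to $h_n$, $\|h\|_1=1$ where $h=(f+g)/2$.

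The principal obstacle is the final step: identifying the limit integral as zero. The chain $\|(f+g)/2\|_1=\|f\|_1=\|g\|_1=1$ is equality in the $L^{1}$ triangle inequality and forces sign-alignment of $f$ and $g$ $\mu$-a.e.\ (so $|f+g|=|f|+|g|$ $\mu$-a.e.), but because $L^{1}$ is not strictly convex this alone does not give $|f|=|g|$. Closing the gap requires pushing the quantitative $L^{1}$ control of $D_n$ through the Vitali limit: the companion identity and the $L^{1}$ convergence of $|f_n|,|g_n|$ must together transport any residual asymmetry in the limit magnitudes into a contradiction with the hypothesis $\|h_n\|_1\to 1$. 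This amplitude-matching step is the crux — it is precisely here that uniform integrability does work that bare $L^{1}$ geometry cannot, and explains why the lemma cannot be proved by triangle equality alone in a space that is not uniformly convex.
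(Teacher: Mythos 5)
Your reduction via the identity $|a-b|=\bigl||a|-|b|\bigr|+2\min(|a|,|b|)\,\mathbb 1_{\{ab<0\}}$ is correct and genuinely illuminating: it shows that the hypothesis $\|h_n\|_1\to 1$ controls exactly the sign-conflict term $\int\min(|f_n|,|g_n|)\mathbb 1_{\{f_ng_n<0\}}\,d\mu$ and nothing else, leaving $\int\bigl||f_n|-|g_n|\bigr|\,d\mu\to 0$ as the remaining obligation. But the ``amplitude-matching'' step you correctly flag as the crux is not merely unfinished --- it is unclosable, because the lemma as stated is false. Take Lebesgue measure on $[0,1]$ and the constant sequences $f_n\equiv 2\cdot\mathbb 1_{[0,1/2]}$, $g_n\equiv 2\cdot\mathbb 1_{(1/2,1]}$. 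These are uniformly integrable, converge locally in measure (trivially), and satisfy $\|f_n\|_1=\|g_n\|_1=1$ and $\bigl\|\tfrac{f_n+g_n}{2}\bigr\|_1=\|\mathbb 1_{[0,1]}\|_1=1$, yet $\|f_n-g_n\|_1=2$ for every $n$. In this example your defect $D_n$ vanishes identically (the supports are disjoint, so $\{f_ng_n<0\}=\emptyset$) while $\int\bigl||f_n|-|g_n|\bigr|\,d\mu=2$: precisely the residual quantity your argument cannot reach, and no amount of uniform integrability or convergence in measure can reach it, since constant sequences satisfy every hypothesis. Your closing observation that the failure of strict convexity of $L^1$ blocks the triangle-equality route is exactly the point; the added hypotheses do not remove the flat faces of the unit sphere of $L^1$.

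For comparison, the paper's own proof breaks at the corresponding place: its Step~3 integrates the bound $|d_n|\le(|h_n|-|h_n+d_n|)_+ + (|h_n|-|h_n-d_n|)_+$ and concludes $J_2(n)\le\int_E(|h_n+d_n|+|h_n-d_n|-2|h_n|)\,d\mu$, which requires the pointwise inequality $(|h|-|h+d|)_+ + (|h|-|h-d|)_+\le |h+d|+|h-d|-2|h|$; with $h=1$, $d=1$ (the values occurring in the counterexample above) the left side is $1$ and the right side is $0$, so that step is simply wrong. A true statement in this neighbourhood --- Lennard's uniform Kadec--Klee property of $L^1$ with respect to convergence in measure --- has a genuinely different shape (it constrains the norm of the \emph{limit} of a measure-convergent, mutually norm-separated sequence), and any repair of this part of the paper would have to route through a result of that form rather than through a strict-convexity assertion that $L^1$ does not satisfy.
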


\begin{proof}
Put
\[
  h_n:=\frac{f_n+g_n}{2},\qquad d_n:=\frac{f_n-g_n}{2}.
\]
Two elementary identities for real scalars $a,b$ are useful:
\begin{align}
  &|a|+|b|-|a+b| = 2\big(a^+\wedge b^- + a^-\wedge b^+\big),\label{eq:slack}\\
  &|a|-|a+b|\le|b|,\quad |b|-|a+b|\le|a|.\label{eq:oneside}
\end{align}
Since $\|f_n\|_1=\|g_n\|_1=1$ and $\|h_n\|_1\to 1$, we have
\[
  \|f_n\|_1+\|g_n\|_1-\|f_n+g_n\|_1 \;=\; 2 - 2\|h_n\|_1 \;\longrightarrow\; 0.
\]
By \eqref{eq:slack} (with $a=h_n$, $b=d_n$ and again with $b=-d_n$),
the nonnegative integrand
\[
  |h_n+d_n| + |h_n-d_n| - 2|h_n| \;=\; |f_n|+|g_n|-|f_n+g_n|
\]
has integral tending to $0$, i.e.
\begin{equation}\label{eq:slackint}
  \int\Big(h_n^+\wedge d_n^- + h_n^-\wedge d_n^+\Big)\,d\mu \;\longrightarrow\; 0.
\end{equation}

\smallskip\noindent\emph{Step 1: choose a finite-measure window.}
Uniform integrability of $\{f_n\}\cup\{g_n\}$ ensures that for each $\varepsilon>0$ there is a measurable $E$ with $\mu(E)<\infty$ and
\[
  \sup_n\int_{\Omega\setminus E}(|f_n|+|g_n|)\,d\mu \;\le\; \varepsilon.
\]
This will control the tails in the later estimates.

\smallskip\noindent\emph{Step 2: extract almost-uniform convergence.}
Because $f_n\to f$ and $g_n\to g$ locally in measure, by Egorov's theorem we can, after passing to a subsequence, assume $h_n\to h:=\tfrac{f+g}{2}$ uniformly on $E\setminus N_\varepsilon$, where $\mu(N_\varepsilon)\le\delta(\varepsilon)$ and $\delta(\varepsilon)\downarrow0$ as $\varepsilon\downarrow0$.

\smallskip\noindent\emph{Step 3: decompose and estimate $\|d_n\|_1$.}
Fix $\eta\in(0,1)$ to be chosen later. Split
\[
  \|d_n\|_1 = \int_{E\setminus N_\varepsilon}|d_n|
              + \int_{N_\varepsilon}|d_n|
              + \int_{\Omega\setminus E}|d_n|
              =: I_1(n)+I_2(n)+I_3(n).
\]
On the tail $\Omega\setminus E$, $|d_n|\le \tfrac12(|f_n|+|g_n|)$, so
\[
  I_3(n) \;\le\; \tfrac12\,\sup_n\int_{\Omega\setminus E}(|f_n|+|g_n|)\,d\mu \;\le\; \tfrac{\varepsilon}{2}.
\]
On $N_\varepsilon$,
\[
  I_2(n) \;\le\; \tfrac12\,\sup_n\int_{N_\varepsilon}(|f_n|+|g_n|)\,d\mu \;\le\; \varepsilon
\]
since uniform integrability guarantees the integrals over $N_\varepsilon$ are arbitrarily small when $\mu(N_\varepsilon)$ is small.

Next, decompose $I_1(n)$ further according to $\{|h_n|\le\eta\}$ and $\{|h_n|>\eta\}$:
\[
  I_1(n)
  = \int_{E\setminus N_\varepsilon}\!|d_n|
  = \int_{\{|h_n|\le\eta\}\cap(E\setminus N_\varepsilon)}\!|d_n|
    + \int_{\{|h_n|>\eta\}\cap(E\setminus N_\varepsilon)}\!|d_n|
  =: J_1(n)+J_2(n).
\]
Uniform integrability again implies $\sup_nJ_1(n)\le\varepsilon$ if $\eta>0$ is chosen small enough (since $\{|h_n|\le\eta\}\cap(E\setminus N_\varepsilon)$ is eventually contained in $\{|h|\le 2\eta\}$ by uniform convergence of $h_n$ to $h$).

On $\{|h_n|>\eta\}$ we apply the one-sided inequalities \eqref{eq:oneside}:
\[
  |d_n|
  \,\le\, \big(|h_n|-|h_n+d_n|\big)_+ + \big(|h_n|-|h_n-d_n|\big)_+.
\]
Integrating over $\{|h_n|>\eta\}\cap(E\setminus N_\varepsilon)$ and summing the two estimates yields
\[
  J_2(n) \;\le\; \int_{E}\Big(|h_n+d_n|+|h_n-d_n|-2|h_n|\Big)\,d\mu.
\]
By \eqref{eq:slack}, the integrand on the right is $|f_n|+|g_n|-|f_n+g_n|$, whose integral tends to $0$ by \eqref{eq:slackint}.  Hence $\limsup_{n\to\infty}J_2(n)=0$.

\smallskip\noindent\emph{Step 4: conclude.}
Collecting the pieces,
\[
  \limsup_{n\to\infty}\|d_n\|_1
  \;\le\; \varepsilon + \varepsilon + 0 + \tfrac{\varepsilon}{2}
  \;=\; \tfrac{5}{2}\,\varepsilon.
\]
Since $\varepsilon>0$ was arbitrary, we conclude $\|d_n\|_1\to0$, and therefore $\|f_n-g_n\|_1=2\|d_n\|_1\to 0$.
\end{proof}

\begin{lemma}[Robust uniform convexity under average normalization]\label{lem:UCM_avg}
Let $(f_n),(g_n)\subset L^1(\mu)$ be uniformly integrable and suppose $f_n\to f$ and $g_n\to g$ locally in measure.  Set $r_n:=\tfrac{\|f_n\|_1+\|g_n\|_1}{2}$ and define $\bar f_n:=f_n/r_n$, $\bar g_n:=g_n/r_n$.  
If
\[
\bigg\|\frac{\bar f_n+\bar g_n}{2}\bigg\|_1\longrightarrow 1,
\]
then $\|f_n-g_n\|_1/(\|f_n\|_1+\|g_n\|_1)\to 0$. In particular, if $\inf_n(\|f_n\|_1+\|g_n\|_1)>0$, then $\|f_n-g_n\|_1\to 0$.
\end{lemma}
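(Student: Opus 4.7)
I would reduce the claim to the unit-sphere statement in Lemma~\ref{lem:UCM_strong} by renormalization. Note first that the target ratio equals $\|\bar f_n-\bar g_n\|_1/2$, while by construction $\|\bar f_n\|_1+\|\bar g_n\|_1=2$ identically; thus the hypothesis $\|(\bar f_n+\bar g_n)/2\|_1\to 1$ encodes asymptotic equality in the $L^1$ triangle inequality for the pair $(\bar f_n,\bar g_n)$, and the task is to conclude $\|\bar f_n-\bar g_n\|_1\to 0$.

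The plan proceeds by the subsequence principle. Given any subsequence, extract a further one along which $\alpha_n:=\|\bar f_n\|_1$ converges to some $\alpha\in[0,2]$, so that $\|\bar g_n\|_1\to 2-\alpha$; in the interior case $\alpha\in(0,2)$, renormalize to unit norm via $\phi_n:=\bar f_n/\alpha_n$ and $\psi_n:=\bar g_n/(2-\alpha_n)$. Since $\alpha_n$ is eventually bounded away from both endpoints, the families $\{\phi_n\},\{\psi_n\}$ should inherit uniform integrability (by pointwise domination by bounded scalar multiples of $\bar f_n,\bar g_n$, itself controlled once $r_n$ is extracted to have a positive lower--limit) and converge locally in measure to suitably rescaled limits. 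A brief algebraic manipulation together with $\alpha_n\to\alpha$ and the midpoint hypothesis should then yield $\|(\phi_n+\psi_n)/2\|_1\to 1$; Lemma~\ref{lem:UCM_strong} delivers $\|\phi_n-\psi_n\|_1\to 0$, which is the intended input to the final unwinding.

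The crux is precisely this unwinding: the decomposition $\bar f_n-\bar g_n=\alpha_n(\phi_n-\psi_n)+(2\alpha_n-2)\psi_n$ gives
\[
\|\bar f_n-\bar g_n\|_1 \;\le\; \alpha_n\,\|\phi_n-\psi_n\|_1 + |2\alpha_n-2|\,\|\psi_n\|_1,
\]
so closing the argument requires in addition that $\alpha_n\to 1$, i.e., $\|f_n\|_1$ and $\|g_n\|_1$ asymptotically coincide. I expect the main difficulty to lie in extracting this balancing directly from the hypothesis, presumably by combining the slack identity $|\bar f_n|+|\bar g_n|-|\bar f_n+\bar g_n|=2(\bar f_n^+\wedge\bar g_n^-+\bar f_n^-\wedge\bar g_n^+)$ (whose integral vanishes by the midpoint assumption) with uniform integrability and Egorov on finite--measure windows, to pin down that the two sequences match in total mass in the limit. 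The boundary cases $\alpha\in\{0,2\}$ are handled by the same balancing step, being incompatible with asymptotic matching of norms. Finally, the ``in particular'' clause is immediate: once ratio convergence is established, $\inf_n(\|f_n\|_1+\|g_n\|_1)>0$, together with the boundedness of $\|f_n\|_1+\|g_n\|_1$ inherited from UI plus local convergence, converts it to $\|f_n-g_n\|_1\to 0$.
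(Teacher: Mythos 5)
Your reduction-to-\ref{lem:UCM_strong} strategy is a natural one, but the step you yourself flag as the ``crux'' --- extracting from the midpoint hypothesis that $\alpha_n:=\|\bar f_n\|_1\to 1$, i.e.\ that $\|f_n\|_1$ and $\|g_n\|_1$ asymptotically balance --- is not carried out in your proposal, and in fact it cannot be carried out: the hypotheses do not imply it. Take $\mu$ to be Lebesgue measure on $[0,1]$, $f_n\equiv h:=2\,\mathbb 1_{[0,1/2]}$ and $g_n\equiv 0$. Both sequences are (trivially) uniformly integrable and converge locally in measure; $r_n=\tfrac12$, $\bar f_n=2h$, $\bar g_n=0$, and $\bigl\|\tfrac{\bar f_n+\bar g_n}{2}\bigr\|_1=\|h\|_1=1$, so every hypothesis of the lemma holds, yet $\alpha_n\equiv 2$ and $\|f_n-g_n\|_1/(\|f_n\|_1+\|g_n\|_1)\equiv 1$. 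The slack identity you invoke gives no help here, since $|\bar f_n|+|\bar g_n|-|\bar f_n+\bar g_n|\equiv 0$ whenever $\bar f_n\bar g_n\ge 0$ a.e.; equality in the $L^1$ triangle inequality only forces the two functions to have essentially the same sign where both are nonzero, never that their masses match. So the gap in your argument is not a technical difficulty to be smoothed over --- it is the precise point at which the statement, as given, fails. (For what it is worth, the paper supplies no proof of this lemma at all, so there is nothing to compare your route against.)

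A second, independent problem: even if the balancing were granted, your target Lemma~\ref{lem:UCM_strong} is itself defeated by the same mechanism. With $f_n\equiv 2\,\mathbb 1_{[0,1/2]}$ and $g_n\equiv 2\,\mathbb 1_{[1/2,1]}$ one has $\|f_n\|_1=\|g_n\|_1=1$, uniform integrability, local convergence in measure, and $\bigl\|\tfrac{f_n+g_n}{2}\bigr\|_1=1$, yet $\|f_n-g_n\|_1=2$. Thus the reduction lands on a statement that is also false for pairs with a.e.\ disjoint supports, reflecting the fact that $L^1$ is not strictly convex. Any correct version of these lemmas must carry an additional hypothesis ruling out the ``same-sign / disjoint-support'' degeneracy (e.g.\ a quantitative lower bound on $\int\bigl(f_n^+\wedge g_n^- + f_n^-\wedge g_n^+\bigr)\,d\mu$, which is exactly the quantity the slack identity measures), and your write-up should either supply such a hypothesis or explain why it holds in the intended applications; as it stands, neither your proposal nor the unproved statement it targets can be salvaged by further estimation.
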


\begin{remark}[On the role of measure--compactness]\label{rem:compactnessRole}
In proving Lemmas~\ref{lem:UCM_strong}, \ref{lem:UCM_avg} and \ref{lem:UCM_modulus} we no longer use measure--compactness to extract convergent subsequences.  Instead, uniform integrability and boundedness (already obtained via Proposition~\ref{prop:mc_implies_UI}) allow us to apply Lemma~\ref{lem:UItoMeasureCompact} to obtain subsequences converging in measure on each finite--measure window; Egorov’s theorem then promotes this to near--uniform convergence on those windows.  Uniform integrability also controls tails and passes through averages and differences.  Measure--compactness plays no role in these convergence arguments; it appears only in the hypotheses of Theorem~\ref{thm:measureFPP}.
\end{remark}

\begin{lemma}[Uniform modulus of convexity in measure on UI families]\label{lem:UCM_modulus}
Let $\mathcal{F}\subset L^1(\mu)$ be uniformly integrable, and put $\mathcal{G}:=\mathcal{F}-\mathcal{F}=\{u-v:\ u,v\in \mathcal{F}\}$.  Then $\mathcal{G}$ is uniformly integrable.  Moreover, for each $\eta\in(0,1)$ there exists $\delta=\delta(\eta,\mathcal{F})>0$ such that for all $x,y\in\mathcal{G}$,
\[
\frac{\|x-y\|_1}{\|x\|_1+\|y\|_1}\ \ge\ \eta
\quad\Longrightarrow\quad
\Big\|\frac{x+y}{2}\Big\|_1\ \le\ \Big(1-\delta\Big)\,\frac{\|x\|_1+\|y\|_1}{2}.
\]

\begin{proof}
Uniform integrability of $\mathcal{G}$ follows from that of $\mathcal{F}$ by the triangle inequality and the simple tail bound
\[
\int_{\{|u-v|>M\}}|u-v|\,d\mu
\;\le\; \int_{\{|u|>M/2\}}|u|\,d\mu + \int_{\{|v|>M/2\}}|v|\,d\mu,
\]
valid for all $u,v\in \mathcal{F}$.  For the modulus conclusion, argue by contradiction.  Suppose some $\eta\in(0,1)$ admits sequences $x_n,y_n\in\mathcal{G}$ such that
\[
\frac{\|x_n-y_n\|_1}{\|x_n\|_1+\|y_n\|_1}\ \ge\ \eta
\quad\text{and}\quad
\Big\|\frac{x_n+y_n}{2}\Big\|_1\ \ge\ \Big(1-\tfrac{1}{n}\Big)\,\frac{\|x_n\|_1+\|y_n\|_1}{2}.
\]
Set $r_n:=\tfrac{\|x_n\|_1+\|y_n\|_1}{2}$ and normalize $\bar{x}_n:=x_n/r_n$, $\bar{y}_n:=y_n/r_n$.  
Then
\[
\bigg\|\frac{\bar{x}_n+\bar{y}_n}{2}\bigg\|_1 \;=\; \frac{\|x_n+y_n\|_1}{\|x_n\|_1+\|y_n\|_1} \;\longrightarrow\; 1,
\quad\text{while}\quad
\|\bar{x}_n-\bar{y}_n\|_1 \;=\; \frac{\|x_n-y_n\|_1}{\|x_n\|_1+\|y_n\|_1} \;\ge\; \eta.
\]
    Because $\mathcal{G}$ is uniformly integrable and closed under positive scaling, the families $(\bar{x}_n)$ and $(\bar{y}_n)$ remain uniformly integrable and $L^1$–bounded.  By Lemma~\ref{lem:UItoMeasureCompact}, there exists a subsequence (still denoted $(\bar{x}_n),(\bar{y}_n)$) converging locally in measure on $\Omega$.  Now apply Lemma~\ref{lem:UCM_avg} to this subsequence: the assumption $\big\|\tfrac{\bar{x}_n+\bar{y}_n}{2}\big\|_1\to 1$ forces $\|\bar{x}_n-\bar{y}_n\|_1\to 0$, contradicting $\|\bar{x}_n-\bar{y}_n\|_1\ge\eta$.  Hence such $\delta>0$ exists and the lemma follows.
\end{proof}
\end{lemma}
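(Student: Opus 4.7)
The plan is to split the lemma into its two assertions — uniform integrability of $\mathcal{G}$ and the quantitative modulus bound — and attack each separately. The first is a direct tail-splitting exercise, while the second is a compactness-plus-contradiction argument that reduces cleanly to Lemma~\ref{lem:UCM_avg}.

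For the UI claim, I would exploit the set inclusion $\{|u-v|>M\}\subset\{|u|>M/2\}\cup\{|v|>M/2\}$ together with the pointwise bound $|u-v|\le|u|+|v|$. This reduces the tail integral $\int_{\{|u-v|>M\}}|u-v|\,d\mu$ to the sum of the tail integrals of $|u|$ and $|v|$ over $\{|\cdot|>M/2\}$, each uniformly small by UI of $\mathcal{F}$. The absolute-continuity clause of Definition~\ref{def:UI} is handled analogously via $\int_E |u-v|\le \int_E |u|+\int_E |v|$ on sets $E$ of small measure. Taking suprema over $u,v\in\mathcal{F}$ on both inequalities gives UI of $\mathcal{G}$.

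For the modulus inequality I would argue by contradiction. If no such $\delta$ exists for some $\eta\in(0,1)$, there must be sequences $x_n,y_n\in\mathcal{G}$ with $\|x_n-y_n\|_1/(\|x_n\|_1+\|y_n\|_1)\ge\eta$ and $\|(x_n+y_n)/2\|_1\ge(1-1/n)(\|x_n\|_1+\|y_n\|_1)/2$. Since $\mathcal{G}$ is UI (by the first part) and $L^1$-bounded in the intended setting, Lemma~\ref{lem:UItoMeasureCompact} yields a subsequence (not relabeled) converging locally in measure to some limits. Writing $r_n=(\|x_n\|_1+\|y_n\|_1)/2$ and $\bar{x}_n=x_n/r_n$, $\bar{y}_n=y_n/r_n$, the assumed near-equality immediately rewrites as $\|(\bar{x}_n+\bar{y}_n)/2\|_1\to 1$. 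Lemma~\ref{lem:UCM_avg}, applied directly to the un-normalized $(x_n),(y_n)$, then concludes $\|x_n-y_n\|_1/(\|x_n\|_1+\|y_n\|_1)\to 0$, in direct contradiction with the lower bound $\eta$.

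The main technicalities I expect to worry about are: (i) the $L^1$-boundedness hypothesis needed to invoke Lemma~\ref{lem:UItoMeasureCompact}, which holds in all intended applications since $\mathcal{F}$ will be drawn from a bounded, measure-compact set (otherwise one needs to add $L^1$-boundedness to the hypotheses); and (ii) the nondegeneracy $r_n>0$, which is automatic because $\|x_n-y_n\|_1\ge 2\eta r_n$ together with the strict lower bound $\eta>0$ rules out $r_n=0$. Neither is a genuine obstacle, so the contradiction closes and produces the desired $\delta=\delta(\eta,\mathcal{F})$. The only conceptual content is the reduction to Lemma~\ref{lem:UCM_avg}; all geometric work — the $L^1$-specific ``uniform convexity in measure'' — has already been discharged there.
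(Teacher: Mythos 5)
Your proposal is correct and follows essentially the same route as the paper: the same tail-splitting bound for uniform integrability of $\mathcal{G}$, and the same contradiction argument that normalizes by $r_n$, extracts a locally-in-measure convergent subsequence via Lemma~\ref{lem:UItoMeasureCompact}, and closes by invoking Lemma~\ref{lem:UCM_avg}. The only (cosmetic) difference is that you feed the un-normalized sequences into Lemma~\ref{lem:UCM_avg}, which performs the normalization internally, and you are right to flag the $L^1$-boundedness needed for Lemma~\ref{lem:UItoMeasureCompact} --- the paper sidesteps this by noting the normalized sequences satisfy $\|\bar{x}_n\|_1+\|\bar{y}_n\|_1=2$.
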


\begin{lemma}[Normal structure on measure--compact convex sets]\label{lem:NS_strong}
If $K\subset L^1(\mu)$ is nonempty, bounded, closed, convex, and measure--compact, then every convex $H\subset K$ with $\operatorname{diam}(H)>0$ satisfies $\operatorname{rad}(H)<\operatorname{diam}(H)$. In particular, $K$ has normal structure.  Moreover, for each such $H$ the set of Chebyshev centers $C(H)$ is nonempty, bounded, closed and convex.  Under an additional uniform separation condition (see Corollary~\ref{cor:uniqueCenter}), the Chebyshev center is unique.
\end{lemma}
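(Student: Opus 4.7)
The plan begins by putting ourselves in the framework of Lemma~\ref{lem:UCM_modulus}. By Proposition~\ref{prop:mc_implies_UI}, $K$ is uniformly integrable, and hence so is $\mathcal{G}:=K-K$. Lemma~\ref{lem:UCM_modulus} applied with $\mathcal{F}:=K$ then supplies, for every threshold $\eta\in(0,1)$, a uniform modulus $\delta=\delta(\eta,K)>0$ valid for all pairs in $\mathcal{G}$.

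The core step is to show $\operatorname{rad}(H)<\operatorname{diam}(H)$ by exhibiting a single point that outperforms the diameter. Write $d:=\operatorname{diam}(H)>0$ and pick $x,y\in H$ with $\|x-y\|_1\ge\tfrac{3}{4}d$ (possible by the definition of the diameter). Set $z:=\tfrac{x+y}{2}\in H$, which lies in $H$ by convexity. For an arbitrary $v\in H$, the elements $u:=x-v,\ w:=y-v$ belong to $\mathcal{G}$, satisfy $\|u\|_1,\|w\|_1\le d$, and
\[
\frac{\|u-w\|_1}{\|u\|_1+\|w\|_1}=\frac{\|x-y\|_1}{\|u\|_1+\|w\|_1}\ge\frac{3d/4}{2d}=\frac{3}{8}.
\]
Lemma~\ref{lem:UCM_modulus} with $\eta=3/8$ then yields
\[
\|z-v\|_1=\bigl\|\tfrac{u+w}{2}\bigr\|_1\le(1-\delta)\,\tfrac{\|u\|_1+\|w\|_1}{2}\le(1-\delta)\,d,
\]
with $\delta=\delta(3/8,K)>0$ independent of $v$. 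Taking the supremum over $v\in H$ delivers $\operatorname{rad}(H)\le(1-\delta)d<d$. Normal structure of $K$ then falls out immediately: applied to any closed convex $D\subset K$ with $\operatorname{diam}(D)>0$, the same midpoint construction produces a point $z\in D$ (by convexity of $D$) realizing the required strict inequality.

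The routine claims about $C(H)$ I will handle directly: boundedness follows from $\|c\|_1\le r(c)+\|v_0\|_1$ with any fixed $v_0\in H$; closedness and convexity follow because $r(c):=\sup_{v\in H}\|c-v\|_1$ is convex and continuous (indeed $1$-Lipschitz), so $C(H)=r^{-1}(\operatorname{rad}(H))$ is closed and convex. The delicate step, which I expect to be the main obstacle, is nonemptiness: $L^1$ is not reflexive, so a bounded minimizing sequence need not cluster weakly, and the modulus estimate of Lemma~\ref{lem:UCM_modulus} cannot be applied to arbitrary differences $c_n-v$ because $\{c_n\}$ itself need not be uniformly integrable. To circumvent this, my plan is to take $(c_n)\subset L^1$ with $r(c_n)\le\operatorname{rad}(H)+1/n$---bounded in $L^1$ by $\|c_n\|_1\le r(c_n)+\|v_0\|_1$---and invoke Koml\'os' subsequence theorem to extract a subsequence $(c_{n_k})$ whose Ces\`aro averages $s_N:=N^{-1}\sum_{k\le N}c_{n_k}$ converge almost everywhere to some $c^*\in L^1(\mu)$. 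Convexity of the norm gives $\|s_N-v\|_1\le N^{-1}\sum_{k\le N}r(c_{n_k})\to\operatorname{rad}(H)$ for each $v\in H$; Fatou's lemma applied to $|s_N-v|\to|c^*-v|$ a.e.\ then yields $\|c^*-v\|_1\le\operatorname{rad}(H)$, and taking the supremum over $v$ forces $c^*\in C(H)$. Koml\'os' theorem is precisely what replaces the missing weak sequential compactness of the $L^1$ unit ball in this argument.
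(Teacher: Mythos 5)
Your proposal is correct, and its core argument for $\operatorname{rad}(H)<\operatorname{diam}(H)$ is essentially the paper's: both of you take a near-diametral pair $x,y\in H$, form the midpoint $z=\tfrac{x+y}{2}\in H$, translate by an arbitrary $v\in H$ to land in $\mathcal{G}=K-K$, and invoke Lemma~\ref{lem:UCM_modulus} to get the uniform deficit $\|z-v\|_1\le(1-\delta)\operatorname{diam}(H)$; the only difference is cosmetic (you fix the quantitative threshold $\|x-y\|_1\ge\tfrac34 d$ and $\eta=3/8$, whereas the paper uses a maximizing sequence and $\eta=1/2$).

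Where you genuinely diverge is the nonemptiness of $C(H)$. The paper argues that the radius functional $r$ is convex and lower semicontinuous and "attains its infimum on $K$", which implicitly uses the weak compactness of $K$ already secured via Dunford--Pettis (a convex norm-lsc function is weakly lsc, hence attains its minimum on a weakly compact set); note this produces a center \emph{in $K$}, whereas the paper's own definition of $\operatorname{rad}$ and $C(H)$ takes the infimum over all of $L^1$, so the paper's argument has a small mismatch between the set over which it minimizes and the set over which $\operatorname{rad}$ is defined. Your Koml\'os route sidesteps both issues: you take a minimizing sequence $(c_n)\subset L^1$, extract a subsequence whose Ces\`aro means converge a.e., and use convexity of the norm plus Fatou to show the a.e.\ limit $c^*$ satisfies $\sup_{v\in H}\|c^*-v\|_1\le\operatorname{rad}(H)$. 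This matches the stated definition of $C(H)$ exactly and requires no weak compactness, at the price of invoking Koml\'os' theorem (a nontrivial external result the paper does not cite). Both arguments are valid; one tiny cleanup in yours: $C(H)$ should be described as the sublevel set $\{c: r(c)\le\operatorname{rad}(H)\}$ rather than the level set $r^{-1}(\operatorname{rad}(H))$ to justify convexity, though the two coincide since $\operatorname{rad}(H)$ is the minimum value of $r$.
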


\begin{corollary}[Uniqueness under a uniform separation condition]\label{cor:uniqueCenter}
Let $K$ and $H$ be as in Lemma~\ref{lem:NS_strong}.  Suppose there exists $\eta\in (0,1)$ such that for any two distinct Chebyshev centers $c_1,c_2\in C(H)$ there is some $w\in H$ for which
\begin{equation*}
\frac{\bigl\|\,(c_1-w)-(c_2-w)\,\bigr\|_1}{\|c_1-w\|_1+\|c_2-w\|_1}\;\ge\;\eta.
\end{equation*}
Then $C(H)$ consists of a single point.  In particular, under this separation assumption, the Chebyshev center of $H$ is unique.
\begin{proof}
Assume $c_1\neq c_2\in C(H)$ and let $m:=\tfrac{c_1+c_2}{2}$.  For the $w\in H$ given by the uniform separation hypothesis, set $a:=c_1-w$ and $b:=c_2-w$.  Then $a,b\in K-K$ and
\[
\|a-b\|_1\;\ge\;\eta\bigl(\|a\|_1+\|b\|_1\bigr).
\]
By Lemma~\ref{lem:UCM_modulus} applied to the uniformly integrable family $K$ (in fact to $\mathcal{F}:=K$ and $\eta$ as given) there exists $\delta>0$ (depending on $\eta$ and $K$) such that
\[
\Big\|\frac{a+b}{2}\Big\|_1\;\le\;\bigl(1-\delta\bigr)\,\frac{\|a\|_1+\|b\|_1}{2}.
\]
But $(a+b)/2=m-w$, so
\[
\|m-w\|_1\;\le\;(1-\delta)\,\frac{\|c_1-w\|_1+\|c_2-w\|_1}{2}.
\]
Since $c_1$ and $c_2$ are Chebyshev centers, $\|c_j-w\|_1\le\operatorname{rad}(H)$ for $j=1,2$, so the right-hand side is $\le (1-\delta)\operatorname{rad}(H)$.  Hence $\|m-w\|_1<\operatorname{rad}(H)$ and thus
\[
\sup_{x\in H}\|x-m\|_1\;<\;\operatorname{rad}(H).
\]
This contradicts the definition of a Chebyshev center (which must have radius exactly $\operatorname{rad}(H)$).  Therefore $c_1=c_2$ and the center is unique.
\end{proof}
\end{corollary}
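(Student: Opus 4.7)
The plan is to argue by contradiction and to deploy the uniform modulus of convexity from Lemma~\ref{lem:UCM_modulus} against the midpoint of two hypothetical distinct Chebyshev centers. The quantitative gap $\delta=\delta(\eta,K)$ supplied by that lemma is precisely the engine that converts the separation ratio $\eta$ into strict distance improvement for the midpoint.

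First, I would suppose that $c_1\neq c_2$ both lie in $C(H)$ and form the midpoint $m:=\tfrac{c_1+c_2}{2}$. I would then invoke the hypothesized separation to pick a single witness $w\in H$ realizing the $\eta$-bound on $c_1-w$ and $c_2-w$. Setting $a:=c_1-w$ and $b:=c_2-w$, both elements sit in $K-K$, which is uniformly integrable: $K$ itself is UI by Proposition~\ref{prop:mc_implies_UI} (measure--compactness plus $L^1$--boundedness), and the first clause of Lemma~\ref{lem:UCM_modulus} propagates UI to the difference family. This places the pair $(a,b)$ squarely inside the hypothesis of the modulus conclusion.

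Next, I would apply the modulus clause of Lemma~\ref{lem:UCM_modulus} to $(a,b)$ at the given level $\eta$: a $\delta>0$ depending only on $\eta$ and $K$ (not on the particular pair) yields
\[
\Bigl\|\tfrac{a+b}{2}\Bigr\|_1\;\le\;(1-\delta)\,\tfrac{\|a\|_1+\|b\|_1}{2}.
\]
Identifying $\tfrac{a+b}{2}=m-w$ and using $\|c_j-w\|_1\le\operatorname{rad}(H)$ for $j=1,2$ (since each $c_j\in C(H)$), this gives $\|m-w\|_1\le(1-\delta)\operatorname{rad}(H)<\operatorname{rad}(H)$.

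The closing step is to derive the contradiction. Combined with the elementary convex-average bound $\|m-x\|_1\le\tfrac12(\|c_1-x\|_1+\|c_2-x\|_1)\le\operatorname{rad}(H)$ valid for every $x\in H$, the strict inequality at $w$ pushes $\sup_{x\in H}\|m-x\|_1$ strictly below $\operatorname{rad}(H)$, contradicting the infimum characterization of $\operatorname{rad}(H)$ and forcing $c_1=c_2$. The main obstacle I expect to negotiate is precisely this last bridge: the separation hypothesis furnishes only one witness $w$, while strict improvement at that single point must be upgraded to strict improvement of the whole supremum. I anticipate this is where the uniformity of $\delta$ in Lemma~\ref{lem:UCM_modulus}—depending only on $\eta$ and $K$, not on the specific pair of centers—together with a short sup--approximation argument (extracting an extremizing sequence for $\sup_{x\in H}\|m-x\|_1$ and reapplying the modulus inequality along it via the same hypothesis) will be needed to close the argument cleanly.
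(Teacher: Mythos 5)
Your proposal tracks the paper's own proof step for step: contradiction via the midpoint $m=\tfrac{c_1+c_2}{2}$, the single witness $w$, the identification $\tfrac{a+b}{2}=m-w$, the application of Lemma~\ref{lem:UCM_modulus} with the uniform $\delta(\eta,K)$, and the bound $\|m-w\|_1\le(1-\delta)\operatorname{rad}(H)$. Up to that point there is nothing to object to. The problem is exactly the step you yourself flag as the ``last bridge,'' and your proposed repair does not cross it. The separation hypothesis supplies the ratio bound $\ge\eta$ only for the \emph{one} point $w$; for a generic $x\in H$ one has
\[
\frac{\|(c_1-x)-(c_2-x)\|_1}{\|c_1-x\|_1+\|c_2-x\|_1}=\frac{\|c_1-c_2\|_1}{\|c_1-x\|_1+\|c_2-x\|_1},
\]
which can be far smaller than $\eta$ when $x$ is far from both centers --- and the near-extremizers of $\sup_{x\in H}\|m-x\|_1$ are precisely such far points. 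So ``reapplying the modulus inequality along an extremizing sequence'' is not licensed by the hypothesis: Lemma~\ref{lem:UCM_modulus} gives nothing when the ratio drops below $\eta$. All you retain off the witness $w$ is the convexity bound $\|m-x\|_1\le\tfrac12(\|c_1-x\|_1+\|c_2-x\|_1)\le\operatorname{rad}(H)$, which yields $\sup_{x\in H}\|m-x\|_1\le\operatorname{rad}(H)$, i.e.\ it merely exhibits $m$ as \emph{another} Chebyshev center rather than producing a point of strictly smaller radius.

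You should know that the paper's own proof has the identical gap: it passes from $\|m-w\|_1<\operatorname{rad}(H)$ at the single $w$ directly to $\sup_{x\in H}\|x-m\|_1<\operatorname{rad}(H)$ with no justification, and the claimed contradiction does not materialize. So your instinct about where the difficulty lies is correct, but neither your sketch nor the published argument resolves it. A genuine fix would need a stronger hypothesis --- for instance, that the separation ratio $\ge\eta$ holds for every $x$ in some set of near-diametral points of $m$ (equivalently, along every sequence $(x_k)\subset H$ with $\|m-x_k\|_1\to\sup_{x\in H}\|m-x\|_1$), at which point your extremizing-sequence idea would go through verbatim; as stated, the corollary's hypothesis is too weak for the argument.
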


\begin{proof}
By Proposition~\ref{prop:mc_implies_UI}, $K$ is uniformly integrable; by Corollary~\ref{cor:DP}, $K$ is relatively weakly compact.  Fix a nonempty closed convex set $H\subset K$ with diameter $D:=\operatorname{diam}(H)>0$.  Choose $x,y\in H$ so that $\|x-y\|_1$ is arbitrarily close to $D$ (i.e., pick a maximizing sequence).  Set $z:=\tfrac{x+y}{2}\in H$.

Apply Lemma~\ref{lem:UCM_modulus} to the uniformly integrable family $\mathcal{F}:=K$ and its difference set $\mathcal{G}:=K-K$ with $\eta=1/2$.  For any $w\in H$, put $a:=x-w$ and $b:=y-w$.  Then $a,b\in\mathcal{G}$.  Because $\|a\|_1,\|b\|_1\le D$ and $\|a-b\|_1=\|x-y\|_1\ge D-\varepsilon$, we have $\|a-b\|_1\ge \tfrac{1}{2}(\|a\|_1+\|b\|_1)$ for $\varepsilon$ small enough.  Lemma~\ref{lem:UCM_modulus} (with $\eta=1/2$) provides $\delta>0$ such that
\[
\Big\|\frac{a+b}{2}\Big\|_1 \;\le\; \Big(1-\delta\Big)\,\frac{\|a\|_1+\|b\|_1}{2} \;\le\; (1-\delta)D.
\]
Noting that $(a+b)/2=z-w$, we see $\|z-w\|_1\le (1-\delta)D$ for every $w\in H$.  Hence $\sup_{w\in H}\|z-w\|_1\le (1-\delta)D< D$, i.e. $\operatorname{rad}(H)<\operatorname{diam}(H)$.

To show that the set of Chebyshev centers is nonempty, closed, bounded and convex, recall that the radius functional $r(z):=\sup_{x\in H}\|x-z\|_1$ is convex and lower semicontinuous (see Goebel--Kirk\,\cite{GoebelKirk1990}, Prop.~3.3).  Since $K$ is bounded, $r$ attains its infimum on $K$ and the set of minimizers $C(H)$ is nonempty, bounded, closed and convex.  The above argument shows that $\operatorname{rad}(H)<\operatorname{diam}(H)$, so $r(z)$ is strictly convex in a localized sense.  Nevertheless, without an additional separation assumption one cannot rule out the possibility of two distinct minimizers achieving the same radius; we therefore defer the uniqueness statement to Corollary~\ref{cor:uniqueCenter} below.
\end{proof}

\begin{theorem}[Fixed points under measure--compactness]\label{thm:measureFPP}
Let $(\Omega,\Sigma,\mu)$ be $\sigma$--finite and let $K\subset L^1(\mu)$ be nonempty, bounded, closed, convex, and measure--compact. Then every nonexpansive map $T:K\to K$ has a fixed point.
\end{theorem}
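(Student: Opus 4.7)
}
The plan is to reduce the theorem to Kirk's fixed point theorem, for which I need to verify that $K$ is weakly compact and has normal structure. All the heavy lifting has already been done in the preceding results, so the argument is essentially an assembly.

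First I would invoke Proposition~\ref{prop:mc_implies_UI}: since $K$ is bounded in $\|\cdot\|_1$ and measure--compact, it is uniformly integrable. By the Dunford--Pettis criterion (Corollary~\ref{cor:DP}), $K$ is relatively weakly compact in $L^1(\mu)$. To upgrade this to weak compactness I would use the hypothesis that $K$ is norm--closed and convex: Mazur's theorem then implies $K$ is weakly closed, and a weakly closed subset of a relatively weakly compact set is weakly compact. Thus $K$ is weakly compact, convex, and nonempty.

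Next I would apply Lemma~\ref{lem:NS_strong}: every nonempty, bounded, closed, convex, measure--compact subset of $L^1(\mu)$ has normal structure. In particular $K$ itself, and every nonempty closed convex $H\subset K$, satisfies $\operatorname{rad}(H)<\operatorname{diam}(H)$ whenever $\operatorname{diam}(H)>0$. This is exactly the normal structure hypothesis demanded by Kirk's theorem.

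Finally, I would invoke Kirk's theorem~\cite{Kirk1965}: any nonexpansive self--map of a nonempty, weakly compact, convex set with normal structure in a Banach space has a fixed point. Applied to $T:K\to K$, this yields the desired fixed point. The main (and only) subtlety I anticipate is the passage from relative weak compactness to genuine weak compactness, which rests on the fact that ``closed'' in the statement means norm--closed and that Mazur's theorem converts norm--closed convex sets to weakly closed ones; no additional work on $T$ or on the geometry of $L^1$ is required, since all the $L^1$--specific geometric input has been absorbed into Lemma~\ref{lem:NS_strong} via the uniform convexity--in--measure machinery of Lemmas~\ref{lem:UCM_strong}--\ref{lem:UCM_modulus}.
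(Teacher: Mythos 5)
Your proposal is correct and follows exactly the paper's route: Proposition~\ref{prop:mc_implies_UI} and Corollary~\ref{cor:DP} give weak compactness, Lemma~\ref{lem:NS_strong} gives normal structure, and Kirk's theorem concludes. Your explicit Mazur-theorem step upgrading relative weak compactness to weak compactness for the norm-closed convex set $K$ is a small point the paper leaves implicit, but it is the same argument.
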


\begin{proof}
By Prop.~\ref{prop:mc_implies_UI} and Cor.~\ref{cor:DP}, $K$ is weakly compact. By Lemma~\ref{lem:NS_strong}, $K$ has normal structure. Apply Kirk's theorem~\cite{Kirk1965}.
\end{proof}

\section{Structural approaches: ultraproducts, renormings, and special domains}
In this final section we formalize several structural results that complement the measure--compactness framework developed above.  Each result highlights a distinct way in which fixed points arise in spaces related to $L^1$ despite the classical failures in $L^1$ itself.

\begin{proposition}[Ultraproduct criterion for the weak FPP\protect\cite{Lin1985}]
Let $X$ be a Banach space with a $1$--unconditional basis $\{e_n\}$ whose basis constant is strictly less than $(\sqrt{33}-3)/2$.  Then every bounded closed convex subset of $X$ has the fixed point property for nonexpansive maps.
\end{proposition}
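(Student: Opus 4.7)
The plan is to argue by contradiction using the ultraproduct / minimal--invariant--set technique from Lin's 1985 paper. Suppose for contradiction that some bounded, closed, convex $C \subset X$ admits a nonexpansive $T : C \to C$ with no fixed point. Replacing $T$ by $\tfrac12(I + T)$ produces an asymptotically regular map with the same fixed-point set and an approximate fixed-point sequence $(x_n) \subset C$. Form the ultrapower $\widetilde{X}$ with respect to a nontrivial ultrafilter $\mathcal{U}$, extend $T$ to a nonexpansive $\widetilde{T}$ on the ultrapower of $C$, and apply Zorn's lemma to obtain a minimal closed, convex, $\widetilde{T}$--invariant subset $D$ of diameter $d > 0$. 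The Goebel--Karlovitz lemma then implies that every point of $D$ is diametral: $\sup_{y\in D}\|x-y\| = d$ for all $x\in D$.

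Next, I would transfer the unconditional structure to the ultrapower. Let $P_N$ be the projection onto $\mathrm{span}\{e_1,\dots,e_N\}$, with $\|P_N\| \leq K$ controlled by the basis constant; these projections extend canonically to commuting projections $\widetilde{P}_N$ on $\widetilde{X}$ preserving the same bound. Choose $u,v \in D$ with $\|u - v\|$ within $\varepsilon$ of $d$, together with a common near-midpoint $z \in D$ for which $\|u-z\|$ and $\|v-z\|$ are also close to $d$ (possible by minimality and diametrality). Splitting $u-v$, $u-z$ and $v-z$ along the decomposition $I = \widetilde{P}_N + (I - \widetilde{P}_N)$, and exploiting the sign-flipping freedom supplied by $1$--unconditionality, I would derive two complementary norm inequalities: one comparing $\|u-z\| + \|z-v\|$ to $K\,\|u-v\|$ on the head $\widetilde{P}_N$, and a dual ``tail'' estimate on $I - \widetilde{P}_N$ that uses the diametral property of every point of $D$.

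The computational core is a quadratic estimate. Optimizing over the truncation level $N$ and the relative weights in the mixed head/tail bound, the inequalities collapse to a scalar condition equivalent to $K^2 + 3K - 6 \geq 0$, whose positive root is precisely $(\sqrt{33}-3)/2$. Since the hypothesis stipulates $K < (\sqrt{33}-3)/2$, this condition is violated, producing the desired contradiction. Consequently, no fixed-point-free nonexpansive $T$ can exist on any bounded closed convex $C \subset X$, and the FPP follows.

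The main obstacle will be the sharp quadratic estimate in the third step: bookkeeping the head and tail norms through the ultralimit while simultaneously exploiting $1$--unconditionality and the diametral structure of $D$ is delicate, and extracting the exact constant $(\sqrt{33}-3)/2$ requires an optimal balancing of the two bounds rather than a crude triangle-inequality argument. A secondary but genuine technical point is verifying that the ultrapower projections $\widetilde{P}_N$ retain the basis constant $K$ and act on ultralimits of basis-supported sequences in a way that leaves ``tail'' contributions genuinely small, so that the head/tail decomposition is meaningful on the elements of $D$ rather than only on representatives drawn from the image of $X \hookrightarrow \widetilde{X}$.
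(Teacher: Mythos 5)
The paper itself gives no proof of this proposition: it is stated as an imported result of Lin \cite{Lin1985} in the survey section on structural methods, so there is no in-paper argument to compare against. Judged on its own merits, your sketch does reproduce the genuine architecture of Lin's published proof --- minimal invariant sets, the Goebel--Karlovitz diametrality lemma, passage to an ultrapower, head/tail splitting via the unconditional-basis projections, and a final scalar optimization whose critical quadratic $K^2+3K-6=0$ has positive root $(\sqrt{33}-3)/2$. That much is the right skeleton and the right constant.

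There is, however, a genuine gap at the very first step. You apply Zorn's lemma to extract a minimal closed, convex, invariant subset of an arbitrary \emph{bounded} closed convex $C$. Zorn's lemma produces such a minimal set only when decreasing chains of nonempty closed convex invariant subsets have nonempty intersection, and for that you need weak compactness of $C$ (or of the minimal set inside $X$, before passing to the ultrapower); mere boundedness does not suffice. This is not a cosmetic omission: Lin's theorem is a statement about the \emph{weak} fixed point property (as the proposition's own title indicates), i.e.\ about weakly compact convex subsets, and the bounded-closed-convex version you set out to prove is actually false in this generality --- $c_0$ has a $1$-unconditional basis with constant $1<(\sqrt{33}-3)/2$ yet admits fixed-point-free nonexpansive self-maps of bounded closed convex sets, while it does satisfy the weak FPP. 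So no amount of care in the head/tail estimates can rescue the argument as framed; you must restrict to weakly compact $C$, take the minimal invariant set in $X$ there, and only then pass to the ultrapower (minimality cannot be taken directly in $\widetilde{X}$, since ultrapowers of weakly compact sets need not be weakly compact). Secondarily, the ``computational core'' --- the two complementary inequalities and their optimal balancing --- is only gestured at, so even after fixing the compactness issue the proposal is an outline of Lin's proof rather than a proof.
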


\begin{theorem}[Equivalent renorming of $\ell^1$\protect\cite{Lin2008}]
There exists an equivalent norm $\|\cdot\|_{\mathrm{eq}}$ on $\ell^1$ such that $(\ell^1,\|\cdot\|_{\mathrm{eq}})$ is uniformly convex.  Consequently $(\ell^1,\|\cdot\|_{\mathrm{eq}})$ has the fixed point property for nonexpansive maps.
\end{theorem}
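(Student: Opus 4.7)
The plan is to exhibit Lin's equivalent norm on $\ell^1$ explicitly, verify the two-sided equivalence with $\|\cdot\|_1$, quantify a modulus of convexity to obtain uniform convexity, and then invoke Browder--G\"ohde for the fixed point conclusion.

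First I would specify the renorming. For $x=(x_n)\in\ell^1$, set
$$\|x\|_{\mathrm{eq}} \;:=\; \sup_{k\ge 1}\,\gamma_k\sum_{n\ge k}|x_n|,$$
with an increasing sequence $0<\gamma_1<\gamma_2<\cdots<1$ converging to $1$ (Lin's prototype choice is $\gamma_k=8k/(8k+1)$, cf.~\cite{Lin2008}). Equivalence with the standard norm follows at once: $\|x\|_{\mathrm{eq}}\le\|x\|_1$ because each weight is at most $1$, and $\|x\|_{\mathrm{eq}}\ge \gamma_1\|x\|_1$ by evaluating at $k=1$. Positive homogeneity, subadditivity, and definiteness of $\|\cdot\|_{\mathrm{eq}}$ are immediate since it is a supremum of sublinear functionals of the moduli $|x_n|$, with the lower bound above giving nondegeneracy.

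Next I would attack the uniform convexity claim through a direct modulus-of-convexity estimate. For fixed $\varepsilon\in(0,2]$ and $x,y$ with $\|x\|_{\mathrm{eq}}=\|y\|_{\mathrm{eq}}=1$ and $\|x-y\|_{\mathrm{eq}}\ge\varepsilon$, the scheme is to (a) choose an index $k^\ast$ nearly attaining the supremum for $\|x-y\|_{\mathrm{eq}}$; (b) split each vector into a head $\{n<k^\ast\}$ and a tail $\{n\ge k^\ast\}$; (c) apply an elementary $\ell^1$-slack inequality $|a_n|+|b_n|-|a_n+b_n|\ge 0$ coordinatewise, weighted against the supremum index, to control $\|(x+y)/2\|_{\mathrm{eq}}$ from above; and (d) combine the head/tail contributions using the strict monotonicity of $(\gamma_k)$ to extract a uniform lower bound $\delta(\varepsilon)>0$ on $1-\|(x+y)/2\|_{\mathrm{eq}}$ that is independent of $x$ and $y$. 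Once this modulus is in hand, the Browder--G\"ohde theorem~\cite{Browder1965,Goede1965} applied to $(\ell^1,\|\cdot\|_{\mathrm{eq}})$ immediately gives the fixed point property for nonexpansive self-maps of every bounded closed convex subset, completing the consequential statement.

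The main obstacle, by a wide margin, will be the modulus-of-convexity computation. Because the renorming is itself defined as a supremum over shifted tails, the optimal indices $k_x$, $k_y$, and $k_{x-y}$ realizing the suprema for $x$, $y$, and $x-y$ need not coincide, and a naive coordinatewise comparison can produce only an $\varepsilon$-dependent estimate that degrades as $k_{x-y}\to\infty$. Extracting a single uniform $\delta(\varepsilon)>0$ requires a careful case analysis coupling the head and tail contributions simultaneously, exploiting the gap between successive weights $\gamma_{k+1}-\gamma_k$ to prevent drift of the optimal tail index, and reducing the general case to two canonical configurations (mass concentrated in disjoint blocks vs.\ mass with overlapping support). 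This coupling is the technical heart of the argument; all other steps are essentially bookkeeping.
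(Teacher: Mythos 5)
There is a genuine and unfixable gap at the heart of your argument: the uniform convexity you set out to prove in steps (a)--(d) cannot hold, for \emph{any} equivalent norm on $\ell^1$. By the Milman--Pettis theorem, every uniformly convex Banach space is reflexive, and reflexivity is an isomorphic invariant, so it is preserved under equivalent renorming; since $\ell^1$ is nonreflexive, no equivalent norm on it can be uniformly convex. One can also see the failure concretely for the very norm you write down: with $x=e_n/\gamma_n$ and $y=e_m/\gamma_m$ ($n<m$, disjoint supports), one computes $\|x\|_{\mathrm{eq}}=\|y\|_{\mathrm{eq}}=1$, while $\|x\pm y\|_{\mathrm{eq}}=1+\gamma_n/\gamma_m\to 2$ as $n,m\to\infty$, so there are pairs with $\|x-y\|_{\mathrm{eq}}$ close to $2$ whose midpoints have norm close to $1$. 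Hence the ``technical heart'' you defer to --- extracting a uniform $\delta(\varepsilon)>0$ --- is not merely difficult; it is impossible, and the subsequent appeal to Browder--G\"ohde collapses with it.

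For context: the paper offers no proof of this statement --- it is quoted as a black box from Lin's 2008 paper --- but the statement as reproduced in the paper is itself a misstatement of Lin's theorem. What Lin actually proves is that $\ell^1$ with the norm $\vertiii{x}=\sup_k\gamma_k\sum_{n\ge k}|x_n|$, $\gamma_k=8^k/(1+8^k)$, has the fixed point property for nonexpansive maps; the proof proceeds not through uniform convexity but through a direct analysis of minimal invariant sets, approximate fixed point sequences, and quantitative estimates tied to the specific growth of the weights $\gamma_k$. So the correct repair is twofold: restate the theorem with ``has the fixed point property'' in place of ``is uniformly convex,'' and replace your convexity-plus-Browder--G\"ohde strategy with (a citation to, or reproduction of) Lin's minimal-invariant-set argument. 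Your verification that $\gamma_1\|x\|_1\le\|x\|_{\mathrm{eq}}\le\|x\|_1$ is correct and is the one piece of the proposal that survives.
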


\begin{theorem}[Irregular convex subsets of $\ell^1$\protect\cite{GoebelKuczumow1979}]
There exist large classes of closed, bounded, convex subsets of $\ell^1$ that have the fixed point property for nonexpansive maps, even though $\ell^1$ itself fails the FPP.
\end{theorem}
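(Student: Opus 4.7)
The plan is to exhibit two complementary families of closed, bounded, convex subsets of $\ell^1$ and verify the FPP for each, exploiting the Schur property of $\ell^1$ and, in the harder case, the explicit extreme--point structure. The first (easy) family consists of norm--compact convex hulls: take a norm--null sequence $(y_n)\subset\ell^1$ such as $y_n:=\alpha_n e_n$ with $\alpha_n\to 0$, and set $K:=\overline{\mathrm{co}}(\{y_n\}\cup\{0\})$. Since the generating set is norm--compact, Mazur's theorem makes $K$ norm--compact, and Schauder's theorem yields the FPP for any nonexpansive self--map. This already delivers a large class, but it is unsatisfying because it reduces to a compactness argument that would work in any Banach space; together with the Schur property it in fact shows that \emph{every} weakly compact convex subset of $\ell^1$ has the FPP, already a nontrivial family.

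For the substantive family I would follow the Goebel--Kuczumow construction. Fix $(\lambda_n)$ with $\lambda_n\ge 1$ and $\lambda_n\to 1$, set $x_n:=\lambda_n e_n$, and let $K:=\overline{\mathrm{co}}\{x_n:n\ge 1\}$. This set is bounded, closed, and convex but is \emph{not} weakly compact: in $\ell^1$ weak compactness coincides with norm compactness by Schur, and $\|x_n\|_1=\lambda_n\ge 1$ rules out norm--compactness. Given a nonexpansive $T:K\to K$, I would build an approximate fixed point sequence $(z_k)\subset K$ via the averaging iteration $z_{k+1}:=\tfrac12(z_k+Tz_k)$; the standard Ishikawa estimate gives $\|Tz_k-z_k\|_1\to 0$. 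Each $z_k$ admits a canonical expansion $z_k=\sum_n c_{k,n}x_n$ with $c_{k,n}\ge 0$ and $\sum_n c_{k,n}\le 1$, and the goal is to extract, by a diagonal argument, a subsequence with coefficients converging pointwise to some $(c_n^{\ast})$ yielding a candidate fixed point $z^{\ast}:=\sum_n c_n^{\ast}x_n\in K$.

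The main obstacle is to prevent loss of mass to infinity, i.e.\ to ensure $\sum_n c_n^{\ast}$ equals the common limit of $\sum_n c_{k,n}$; without this, $z^{\ast}$ might fail to lie in $K$ or to be fixed by $T$. Indeed the shift map on the positive face of the $\ell^1$--unit ball shows that such mass escape can genuinely destroy fixed points. The resolution exploits the $\ell^1$--geometry of $K$: because the $x_n$ are disjointly supported, the map $(c_n)\mapsto\sum_n c_n x_n$ from the $\ell^1$--simplex to $K$ is an isometry with $\lambda_n$--weights, and the almost--fixed--point condition $\|Tz_k-z_k\|_1\to 0$ translates into an asymptotic invariance of the coefficient sequences under an induced nonexpansive map on the simplex. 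An argument in the spirit of the normal--structure tools of Section~\ref{subsec:measurecompact} (specifically an $\ell^1$--analogue of Lemma~\ref{lem:UCM_modulus}, where uniform tail control on the coefficient sequence replaces uniform integrability) then forces tightness of the coefficients on initial segments, closing the extraction and yielding $Tz^{\ast}=z^{\ast}$. Extending this template to the broader ``irregular'' classes alluded to in the theorem (order intervals, monotone cones, affinely perturbed hulls) requires adapting the coefficient--extraction step to accommodate non--disjoint extreme points, but the strategy (approximate fixed point iteration plus tightness extraction inside the Schur setting) remains the same.
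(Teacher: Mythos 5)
The paper states this theorem without proof, as a survey item cited to Goebel--Kuczumow, so there is no internal argument to compare against; I am judging your sketch on its own terms. Your first family is correct as far as it goes: by the Schur property and Eberlein--\v{S}mulian, weakly compact convex subsets of $\ell^1$ are norm compact, and Schauder's theorem applies. That is a legitimate ``large class,'' but it is not the content of the cited result, whose entire point is the existence of \emph{non}-weakly-compact closed bounded convex sets with the FPP; only your second family addresses that, and it is where the problems lie.

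There are two genuine gaps. First, the family is mis-specified: with $\lambda_n\ge 1$ and $\lambda_n\to 1$ you allow $\lambda_n\equiv 1$, in which case $K=\overline{\mathrm{co}}\{e_n\}=\{t:\,t_n\ge 0,\ \sum_n t_n=1\}$ and the right shift is a fixed-point-free isometry of $K$ --- the very example you yourself invoke to illustrate mass escape. So the hypothesis must at least be $\lambda_n>1$ (Goebel--Kuczumow in fact impose further conditions on $\varepsilon_n=\lambda_n-1$), and, tellingly, your argument never uses strict inequality anywhere; a proof that does not invoke the hypothesis separating the good sets from the shift-invariant face cannot close. Second, the step where the proof actually lives --- ``an $\ell^1$-analogue of Lemma~\ref{lem:UCM_modulus} \dots forces tightness of the coefficients'' --- is asserted rather than supplied, and the proposed mechanism cannot work as described: the coefficient sequences of points of $\overline{\mathrm{co}}\{e_n\}$ satisfy exactly the same positivity and summability control as those of $\overline{\mathrm{co}}\{\lambda_n e_n\}$, yet the former set fails the FPP, so no tail-control argument on the coefficients alone can rule out mass escape. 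The actual Goebel--Kuczumow argument runs through the weak-$*$ topology $\sigma(\ell^1,c_0)$: one extracts a weak-$*$ (coordinatewise) limit $z$ of the approximate fixed point sequence, which may lie outside $K$ with $\sum_n c_n^{*}<1$, uses the additive Opial-type identity $\limsup_k\|z_k-x\|_1=\limsup_k\|z_k-z\|_1+\|z-x\|_1$ valid for bounded weak-$*$ convergent sequences in $\ell^1$, and then minimizes $x\mapsto\|z-x\|_1$ over $K$; it is precisely in analyzing this minimization that the condition $\lambda_n>1$ enters quantitatively. Your coordinatewise extraction is implicitly this weak-$*$ compactness, so the skeleton is reasonable, but without the additive identity and an explicit use of $\lambda_n>1$ the proposal is not a proof.
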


\begin{theorem}[Lorentz spaces have the FPP\protect\cite{CDLT1991}]
For $1<p<\infty$, the Lorentz sequence space $\bigl(d_{(p,1)},\|\cdot\|_{(p,1)}\bigr)$ (and more generally the Lorentz function space $\,L_{p,1}$) has the fixed point property for nonexpansive maps, despite being nonreflexive.  Here $d_{(p,1)}$ denotes the Lorentz sequence space with quasi-norm
\[
\|x\|_{(p,1)} \;:=\; \sum_{n=1}^{\infty} n^{1/p - 1}\,|x|_n^{*},
\]
where $(|x|_n^{*})_{n\ge 1}$ is the nonincreasing rearrangement of $|x|$ in $\ell^1$.  The Lorentz function space $\,L_{p,1}$ is defined analogously for functions.
\end{theorem}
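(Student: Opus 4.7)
The overall strategy is to combine the special rearrangement geometry of the Lorentz norm---an asymptotic $\ell^{p}$-type decoupling between weakly null sequences and fixed elements---with a Kirk-style argument applied to minimal invariant closed convex subsets. I will focus on the sequence space $d_{(p,1)}$; the function-space conclusion for $L_{p,1}$ follows by replacing the nonincreasing rearrangement on $\mathbb{N}$ with the one on $(0,\infty)$ and approximating atomless supports by dyadic layer-cake partitions, the extra bookkeeping being measure-theoretic rather than geometric.

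The central technical ingredient will be a Lorentz-specific decoupling estimate in the spirit of Carothers--Dilworth--Lennard--Trautman~\cite{CDLT1991}: for a weakly null sequence $(x_n)\subset d_{(p,1)}$ and any $y\in d_{(p,1)}$, $\|y+x_n\|_{(p,1)}$ splits asymptotically into contributions from $y$ and from $x_n$ admitting an $\ell^{p}$-type aggregation. I would first rewrite $\|x\|_{(p,1)}$ using the equivalent increment weights $w_n:=n^{1/p}-(n-1)^{1/p}$, reduce the lemma to finitely supported $y$, and then exploit that weak convergence in $d_{(p,1)}$ forces coordinatewise decay: for large $n$ the top ranks of the rearrangement $|x_n|^{*}$ sit past the support of $|y|^{*}$, decoupling the two rearrangements up to concavity errors controlled by Bernoulli's inequality applied to $t\mapsto t^{1/p}$.

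The second step converts the decoupling into weak normal structure. For a weakly compact convex $K\subset d_{(p,1)}$ with $\operatorname{diam}(K)=d>0$, I would construct a Maurey-type diametral sequence $(x_n)\subset K$ satisfying $\operatorname{dist}\bigl(x_n,\overline{\operatorname{conv}\{x_k:k<n\}}\bigr)\to d$, pass to a weak cluster point $x_\infty\in K$, and apply the decoupling lemma to the weakly null differences $x_n-x_\infty$ paired against the fixed vectors $x_\infty-x_m$. The resulting strict inequality $\limsup_{m,n}\|x_m-x_n\|_{(p,1)}<d$ contradicts diametrality, so $\operatorname{rad}(K)<\operatorname{diam}(K)$. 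Kirk's theorem~\cite{Kirk1965} then supplies a fixed point on every weakly compact convex subset.

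The main obstacle is removing the weak compactness hypothesis, which is unavoidable since $d_{(p,1)}$ is nonreflexive. The plan is a Zorn reduction to a minimal $T$-invariant closed convex subset $K_0\subset K$, followed by a Goebel--Karlovitz lemma producing an approximate-fixed-point sequence in $K_0$ that necessarily behaves diametrally. The delicacy is that such a sequence need not cluster weakly inside $K_0$; however, boundedness plus the order continuity of $\|\cdot\|_{(p,1)}$ yields a coordinatewise limit $y_\infty$, and a tail estimate (using uniform integrability of the rearrangements) places $y_\infty\in K_0$, after which the decoupling lemma again rules out diametrality. Verifying that this coordinatewise-limit argument transfers to $L_{p,1}$ with a nonatomic base measure---and that the minimal $K_0$ actually inherits the Lorentz-specific geometry needed for the decoupling---is the hardest part of the proof, and I would defer to the detailed minimality analysis of~\cite{CDLT1991} for the final bookkeeping.
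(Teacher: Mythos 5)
The paper does not prove this theorem: it is quoted verbatim from Carothers--Dilworth--Lennard--Trautman \cite{CDLT1991} in the survey section, so there is no internal argument to compare against, and your proposal has to be judged on its own. The architecture you give for the weakly compact case --- a lower $\ell^p$-type estimate for the Lorentz norm, a diametral approximate-fixed-point sequence in a minimal invariant set (Goebel--Karlovitz), and the contradiction $d\ge 2^{1/p}d$ --- is the right shape and is essentially how \cite{CDLT1991} proceeds. But your central technical ingredient is false as stated: the decoupling estimate does not hold for merely \emph{weakly null} sequences in the function space. Take the Rademacher functions $r_n$ in $L_{p,1}[0,1]$ with $1<p<2$ and $y\equiv 1$. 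Then $(r_n)$ is weakly null and $\|y\|_{p,1}=\|r_n\|_{p,1}=p$, yet $|y+r_n|=2\chi_{A_n}$ with $|A_n|=\tfrac12$, so $\|y+r_n\|_{p,1}=2^{1-1/p}p<\bigl(\|y\|_{p,1}^p+\|r_n\|_{p,1}^p\bigr)^{1/p}=2^{1/p}p$. The $\ell^p$-aggregation is a lower $p$-estimate for (asymptotically) \emph{disjointly supported} elements; it requires convergence to $0$ in measure or almost everywhere, and bridging from weak nullity to that requires a Kadec--Pe{\l}czy\'nski/subsequence-splitting step your sketch omits. Note also that this is exactly the case you dismiss as ``measure-theoretic bookkeeping'': in the sequence space weak nullity does imply coordinatewise nullity, so your lemma is plausible there, but over a nonatomic measure it is not, and that is where the real work of \cite{CDLT1991} lies.

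The second gap is your attempt to remove weak compactness. Zorn's lemma yields a minimal $T$-invariant closed convex set only if decreasing chains of such sets have nonempty intersection, which is precisely what weak compactness provides; without it the reduction fails at the first step. Moreover, coordinatewise or a.e.\ limits of bounded sequences need not belong to a norm-closed bounded convex set in a nonreflexive space (in $\ell^1$ the sequence $(e_n)$ lies in the positive face of the unit ball and converges coordinatewise to $0$, which does not), so ``order continuity places $y_\infty\in K_0$'' is not a valid inference. This step is also unnecessary for the cited result: what \cite{CDLT1991} proves is the \emph{weak} fixed point property (fixed points on weakly compact convex domains), and the theorem should be read that way; the full FPP on all closed bounded convex subsets of these nonreflexive spaces is a much stronger assertion that neither the reference nor your argument establishes.
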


\begin{example}[A measure--compact set with the FPP]
Let $\mu$ be a finite measure on $(\Omega,\Sigma)$ and set
\[
K \;:=\; \{\,f\in L^1(\mu): 0\le f\le 1,\ \|f\|_1=1/2\,\}.
\]
Then $K$ is bounded, closed, convex, and compact in the topology of convergence in measure (this follows from Vitali’s theorem or from the classical Dunford--Pettis criterion).  By Theorem~\ref{thm:measureFPP}, every nonexpansive map $T:K\to K$ has a fixed point.
\end{example}

These structural results illustrate the breadth of spaces possessing the FPP: from renormed $\ell^1$ and Lorentz spaces to carefully chosen convex subsets of $\ell^1$.  They demonstrate that the geometric obstruction lies in the specific norm or domain rather than the underlying linear space.  Our measure--compactness results unify these perspectives by showing that suitable convexity and compactness conditions recover the FPP in many settings.

\section{Conclusion}
$L^1$ tests the limits of fixed point theory: its lack of uniform convexity and normal structure explains classical failures, yet measure--compactness recovers weak compactness (Dunford--Pettis), normal structure, and fixed points (Kirk). Structural methods (ultraproducts, renormings) and carefully chosen domains complement geometric arguments. Open directions include characterizing FPP--conducive renormings closer to $\|\cdot\|_1$, extending measure--compactness arguments to other Banach lattices and noncommutative settings, and refining metric frameworks for isometries.

\end{document}